\DeclareSymbolFont{bbold}{U}{bbold}{m}{n}
\DeclareSymbolFontAlphabet{\mathbbold}{bbold}
\newcommand{\ind}{\mathbbold{1}}
\newcommand{\C}{\mathbb{C}}
\newcommand{\N}{\mathbb{N}}
\newcommand{\R}{\mathbb{R}}
\newcommand{\Ls}{\mathcal{L}}
\renewcommand{\Re}{\operatorname{Re}}
\renewcommand{\Im}{\operatorname{Im}}
\newcommand{\<}{\langle}
\renewcommand{\>}{\rangle}
\newcommand{\from}{\colon}
\newcommand{\rfrac}[2]{\tfrac{#1}{\raisebox{0.1em}{\scriptsize$#2$}}}
\newcommand{\nr}[1]{\vspace{0.2ex}\noindent\hspace*{9mm}\llap{\textup{(#1)}}}
\renewcommand\le{\leqslant}
\renewcommand\ge{\geqslant}
\renewcommand{\|}{|\!|}
\renewcommand\c{\textnormal{c}}
\newcommand\ess{\textnormal{ess}}
\newcommand{\bigln}{\big|\mskip-4mu\mathopen\big|}
\newcommand{\bigrn}{\mathclose\big|\mskip-4mu\big|}
\newcommand{\set}[2]{\bigl\{#1{;}\breakOK\;#2\bigr\}}
\newcommand{\sset}[2]{\{#1{;}\breakOK\;#2\}}
\newcommand{\+}{\hspace{0.1em}}
\newcommand{\eps}{\varepsilon}
\newcommand\operator[1]{\expandafter\newcommand\csname#1\endcsname{\operatorname{#1}}}
\newtheorem{theorem}{Theorem}[section]
\newtheorem{corollary}[theorem]{Corollary}
\newtheorem{lemma}[theorem]{Lemma}
\newtheorem{proposition}[theorem]{Proposition}
\theoremstyle{definition}
\newtheorem{remark}[theorem]{Remark}
\newtheorem{remarks}[theorem]{Remarks}
\numberwithin{equation}{section}
\newcommand{\breakOK}{\penalty0}
\newcommand{\avoidbreak}{\postdisplaypenalty50}
\renewcommand{\.}{.\nobreak\@ifnextchar1{\hskip0.18em plus0.05em minus
0.02em}{\hskip0.22em plus0.06em minus 0.04em}}
\renewcommand\section{\@startsection {section}{1}{\z@}%
                                     {-3.25ex \@plus -1ex \@minus -.2ex}%
                                     {1.5ex \@plus.2ex}%
                                     {\normalfont\large\bfseries}}
\def\env@cases{%
  \let\@ifnextchar\new@ifnextchar
  \left\lbrace
  \def\arraystretch{1.1}%
  \array{@{\,}l@{\quad}l@{}}%
}
\newcommand\MoveEqLeft[1][2]{%
  \global\@tempdima=#1em%
  \kern\@tempdima%
  &
  \kern-\@tempdima}
\newcommand*\if@single[3]{%
  \setbox0\hbox{${\mathaccent"0362{#1}}^H$}%
  \setbox2\hbox{${\mathaccent"0362{\kern0pt#1}}^H$}%
  \ifdim\ht0=\ht2 #3\else #2\fi
  }
\newcommand*\rel@kern[1]{\kern#1\dimexpr\macc@kerna}
\newcommand*\widebar[1]{\@ifnextchar^{{\wide@bar{#1}{0}}}{\wide@bar{#1}{1}}}
\newcommand*\wide@bar[2]{\if@single{#1}{\wide@bar@{#1}{#2}{1}}{\wide@bar@{#1}{#2}{2}}}
\newcommand*\wide@bar@[3]{%
  \begingroup
  \def\mathaccent##1##2{%
    \if#32 \let\macc@nucleus\first@char \fi
    \setbox\z@\hbox{$\macc@style{\macc@nucleus}_{}$}%
    \setbox\tw@\hbox{$\macc@style{\macc@nucleus}{}_{}$}%
    \dimen@\wd\tw@
    \advance\dimen@-\wd\z@
    \divide\dimen@ 3
    \@tempdima\wd\tw@
    \advance\@tempdima-\scriptspace
    \divide\@tempdima 10
    \advance\dimen@-\@tempdima
    \ifdim\dimen@>\z@ \dimen@0pt\fi
    \rel@kern{0.6}\kern-\dimen@
    \if#31
      \overline{\rel@kern{-0.6}\kern\dimen@\macc@nucleus
                \rel@kern{0.4}\kern\dimen@}%
      \advance\dimen@0.4\dimexpr\macc@kerna
      \let\final@kern#2%
      \ifdim\dimen@<\z@ \let\final@kern1\fi
      \if\final@kern1 \kern-\dimen@\fi
    \else
      \overline{\rel@kern{-0.6}\kern\dimen@#1}%
    \fi
  }%
  \macc@depth\@ne
  \let\math@bgroup\@empty \let\math@egroup\macc@set@skewchar
  \mathsurround\z@ \frozen@everymath{\mathgroup\macc@group\relax}%
  \macc@set@skewchar\relax
  \let\mathaccentV\macc@nested@a
  \if#31
    \macc@nested@a\relax111{#1}%
  \else
    \def\gobble@till@marker##1\endmarker{}%
    \futurelet\first@char\gobble@till@marker#1\endmarker
    \ifcat\noexpand\first@char A\else
      \def\first@char{}%
    \fi
    \macc@nested@a\relax111{\first@char}%
  \fi
  \endgroup
}
\begin{document}

\medmuskip=4mu plus 2mu minus 3mu
\thickmuskip=5mu plus 3mu minus 1mu
\abovedisplayshortskip=0pt plus 3pt minus 3pt
\belowdisplayshortskip=\belowdisplayskip
\newcommand\smallads{\vadjust{\vskip-5pt plus-1pt minus-1pt}}
\newcommand\smallbds{\vskip-1\lastskip\vskip5pt plus3pt minus2pt\noindent}

\title{\Large $L_1$-estimates for eigenfunctions
and heat kernel estimates \\for semigroups
dominated by the free heat semigroup}
\author{Hendrik Vogt%
\footnote{Fachbereich 3 -- Mathematik, Universit{\"a}t Bremen, 28359 Bremen, Germany,
+49\,421\,218-63702,
{\tt hendrik.vo\rlap{\textcolor{white}{hugo@egon}}gt@uni-\rlap{\textcolor{white}{hannover}}bremen.de}}}
\date{}

\maketitle

\begin{abstract}
We investigate selfadjoint positivity preserving $C_0$-semi\-groups that are
dominated by the free heat semigroup on $\R^d$.
Major examples are semigroups generated by Dirichlet Laplacians on open subsets
or by Schr\"odinger operators with absorption potentials.
We show explicit global Gaussian upper bounds for the kernel that correctly
reflect the exponential decay of the semigroup.
For eigenfunctions of the generator that correspond to eigenvalues below the
essential spectrum we prove estimates of their $L_1$-norm in terms of the
$L_2$-norm and the eigenvalue counting function.
This estimate is applied to a comparison of the heat content with the heat
trace of the semigroup.

\vspace{8pt}

\noindent
MSC 2010: 35P99, 35K08, 35J10, 47A10

\vspace{2pt}

\noindent
Keywords: Schr\"odinger operators, eigenfunctions, $L_1$-estimates, \\
\phantom{Keywords: }%
heat kernel estimates, heat content, heat trace
\end{abstract}

\section{Introduction and main results}\label{sec_intro}

In the recent paper \cite{bhv13}, the authors studied Dirichlet Laplacians on
open subsets $\Omega$ of $\R^d$. They proved an estimate for the $L_1$-norm of
eigenfunctions in terms of their $L_2$-norm and spectral data, and they used
this to estimate the heat content of $\Omega$ by its heat trace. The aim of the
present paper is to provide sharper estimates in the following more general
setting.

Let $\Omega\subseteq\R^d$ be measurable, where $d\in\N$, and let $T$ be a selfadjoint positivity
preserving $C_0$-semi\-group on $L_2(\Omega)$ that is dominated by the free heat
semigroup, i.e.,
\[
  0 \le T(t)f \le e^{t\Delta}f \qquad \bigl(t\ge0,\ 0 \le f \in L_2(\Omega)\bigr).
  \avoidbreak
\]
Let $-H$ denote the generator of $T$.

An important example for the operator $-H$ is the Dirichlet Laplacian with
a locally integrable absorption potential on an open set $\Omega\subseteq\R^d$.
For more general absorption potentials the space of strong continuity of
the semigroup will be $L_2(\Omega')$ for some measurable $\Omega'\subseteq\Omega$.

In our first main result we estimate the $L_1$-norm of eigenfunctions of $H$ in
terms of their $L_2$-norm and the eigenvalue counting function $N_t(H)$, which
for $t < \inf\sigma_\ess(H)$ denotes the number of eigenvalues of $H$ that are $\le
t$, counted with multiplicity.

\begin{theorem}\label{l1-est}
Let $\varphi$ be an eigenfunction of $H$ with eigenvalue $\lambda < \inf\sigma_\ess(H)$. Then
\[
  \|\varphi\|_1^2 \le c_d (t-\lambda)^{-d/2} \bigl(\ln\tfrac{3t}{t-\lambda}\bigr)^d N_t(H) \|\varphi\|_2^2
  \qquad \bigl(\lambda < t < \inf\sigma_\ess(H)\bigr),
\]
with $c_d = 35^{d+1}d^{d/2}$.
\end{theorem}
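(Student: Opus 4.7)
The plan is to exploit the eigenvalue equation by pushing $\varphi$ through the smoothed spectral projector $e^{-sH}P_t$ and then controlling $\|\varphi\|_1$ via pointwise and trace bounds on the resulting finite-rank kernel, combined via a dyadic level-set decomposition.

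Normalise $\|\varphi\|_2 = 1$ and fix $s > 0$ to be chosen later. Since $\lambda < t < \inf\sigma_\ess(H)$, the spectral projection $P := \ind_{(-\infty,t]}(H)$ has finite rank $N := N_t(H)$ and satisfies $P\varphi = \varphi$, so $\varphi = e^{s\lambda}e^{-sH}P\varphi$. Let $\{\psi_i\}$ be an $L_2$-orthonormal eigenbasis with eigenvalues $\mu_i \le t$, so the positive operator $e^{-sH}P$ has kernel $K_s(x,y) = \sum e^{-s\mu_i}\psi_i(x)\overline{\psi_i(y)}$. Applying Cauchy--Schwarz in $i$ to the spectral expansion $e^{-s\lambda}\varphi(x) = \sum e^{-s\mu_i}\langle\varphi,\psi_i\rangle\psi_i(x)$ yields the key pointwise estimate
\[
  |\varphi(x)|^2 \le e^{s\lambda}K_s(x,x).
\]
Two further ingredients follow from the domination hypothesis: the operator inequality $e^{-sH}P \le e^{-sH}$ together with pointwise domination of $p_s^H$ by the free-heat kernel gives $K_s(x,x) \le (4\pi s)^{-d/2} =: M$; and $H \ge 0$ (a consequence of the domination, via interpolation of the $L_1$- and $L_\infty$-contractivity) yields the trace bound $\int K_s(x,x)\,dx = \sum e^{-s\mu_i} \le N$.

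The naive estimate $\|\varphi\|_1 \le e^{s\lambda/2}\int K_s(x,x)^{1/2}\,dx$ is hopeless because $K_s^{1/2}$ is generally not integrable---eigenfunctions below $\sigma_\ess$ need not have compact support. Instead I would decompose $\R^d$ dyadically by the size of $K_s(x,x)$: put $A_k := \{x : K_s(x,x) \in [M 2^{-k-1}, M 2^{-k})\}$ for $k \ge 0$ (the pointwise cutoff forces $k \ge 0$), so Chebyshev from the trace gives $|A_k| \le 2N\cdot 2^k/M$, and the sharp trace constraint $\sum |A_k|\,2^{-k} \le 2N/M$ holds. On each $A_k$, two complementary estimates are available: $\int_{A_k}|\varphi|\,dx \le |A_k|^{1/2}$ from $L_2$-Cauchy--Schwarz (using $\|\varphi\,\ind_{A_k}\|_2 \le 1$), and the refined bound $\int_{A_k}|\varphi|\,dx \le |A_k|\cdot e^{s\lambda/2}(M 2^{-k})^{1/2}$ obtained from $\int_{A_k}|\varphi|^2 \le e^{s\lambda}M 2^{-k}|A_k|$. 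Taking the sharper bound band by band and Cauchy--Schwarz across the sum confines the contributing scales to a window of size $\sim\ln\tfrac{3t}{t-\lambda}$; combined with the $d$-dimensional heat-kernel factor $M$, the dyadic summation produces the $(\ln\tfrac{3t}{t-\lambda})^{d/2}$ factor in $\|\varphi\|_1$.

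Finally, choose $s \asymp 1/(t-\lambda)$: this keeps $e^{s\lambda}$ bounded by an absolute constant while converting $1/M \asymp (t-\lambda)^{-d/2}$, so that squaring gives the claimed bound $\|\varphi\|_1^2 \le c_d(t-\lambda)^{-d/2}(\ln\tfrac{3t}{t-\lambda})^d N\|\varphi\|_2^2$, with $c_d = 35^{d+1}d^{d/2}$ emerging from careful numerical tracking through the dyadic sum and the $d$-dimensional geometry (the $d^{d/2}$ being of Stirling/sphere-volume type). The main obstacle is precisely the dyadic step: neither the pointwise $L_\infty$-bound nor the trace $L_1$-bound on $K_s$ alone suffices, so extracting a finite estimate with the correct $(\ln)^d$ correction requires a delicate balancing of the two across logarithmically many scales, with sharp constants in each of the $d$ spatial directions.
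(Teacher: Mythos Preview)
Your proposal has a fundamental gap in the dyadic step. The four constraints you extract---$|\varphi(x)|^2 \le e^{s\lambda}K_s(x,x)$, $K_s(x,x) \le M = (4\pi s)^{-d/2}$, $\int K_s(x,x)\,dx \le N$, and $\|\varphi\|_2 = 1$---do \emph{not} suffice to bound $\|\varphi\|_1$, so no amount of dyadic bookkeeping can close the argument. Here is a concrete obstruction: given $M,N,s,\lambda$ and any $\eps \in (0,M]$, set $K_s := \eps$ on a set $F$ of measure $N/\eps$ (and $0$ elsewhere), and $\varphi := |F|^{-1/2}\ind_F$. Then all four constraints hold (the pointwise one reads $\eps/N \le e^{s\lambda}\eps$, which is automatic since $\lambda\ge0$ and $N\ge1$), yet $\|\varphi\|_1 = |F|^{1/2} = (N/\eps)^{1/2} \to \infty$ as $\eps \to 0$. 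In your dyadic language all the mass sits in a single band $A_{k_0}$ with $2^{k_0}\approx M/\eps$, and on that band \emph{both} of your competing estimates are of order $(N/\eps)^{1/2}\sim 2^{k_0/2}$. The diagonal $K_s(x,x)$ simply carries no information preventing the eigenfunction from being spread thinly over an enormous region.

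There are two further, independent errors. First, with $s \asymp 1/(t-\lambda)$ one has $e^{s\lambda} \asymp e^{\lambda/(t-\lambda)}$, which is not bounded by an absolute constant (take $t$ close to $\lambda$). Second, a one-parameter dyadic decomposition of the \emph{values} of $K_s(x,x)$ has no mechanism for producing a power $(\ln)^d$ rather than $(\ln)^1$; the exponent $d$ in the theorem is geometric, not combinatorial. The paper's proof is of an entirely different nature: it localises $\varphi$ in \emph{space} via the sets $F_r(t)=\{x:E_0(H_{B(x,r)})<t\}$ and $G_r(t)=\R^d\setminus\overline{U_r(F_r(t))}$, controls $|U_{3r}(F_r)|$ by a covering argument (Lemma~\ref{UsFr}), obtains a spectral gap for $H_{G_r}$ via an IMS-type identity (Lemma~\ref{E0-est}), and then uses the off-diagonal $L_1$-resolvent estimates of Theorem~\ref{res-est} and Proposition~\ref{free-res} to bound the tail $\|\ind_{\Omega\setminus U_{3r}(F_r)}\varphi\|_1$. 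The factor $(\ln\tfrac{3t}{t-\lambda})^d$ then enters through the localisation radius~$r$, namely through the volume $r^d$.
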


\begin{remarks}\label{l1-rem}
(a) We point out that as in \cite[Thm\.1.6]{bhv13} one has the lower bound
\[
  \|\varphi\|_1^2 \ge \bigl(\rfrac{2\pi d}{e}\bigr)^{d/2} \lambda^{-d/2} \|\varphi\|_2^2\,.
\]
Thus, the factor $d^{d/2}$ in the constant $c_d$ is of the correct order. The
factor $(t-\lambda)^{-d/2}$ matches the factor $\lambda^{-d/2}$; cf.\
Corollary~\ref{l1-cor} below. See \cite[Example~1.8(3)]{bhv13} for an
explanation why one should expect the factor $N_t(H)$ with some $t>\lambda$
in the estimate of Theorem~\ref{l1-est}.

(b) In \cite[Thm\.1.3]{bhv13}, in the framework of Dirichlet Laplacians on open
subsets of $\R^d$, the estimate
\[\renewcommand\-{\mkern-4.5mu}
  \|\varphi\|_1^2 \le C_d E_0^{-d/2} \left(
          \left( \frac{\lambda}{E_0} \+ \frac{\lambda}{t-\lambda} \right)^{\-d\!}
            \bigl(\ln N_t(H)\bigr)^d N_t(H)
          + \left( \frac{\lambda}{E_0} \right)^{\-4d-3\!}
            \left( \frac{\lambda}{t-\lambda} \right)^{\-4d\,}
          \right) \|\varphi\|_2^2
\]
was shown under the additional assumption $t\le3\lambda$, where $E_0 = \inf\sigma(H)$. Our
estimate $\|\varphi\|_1^2 \le c_d \lambda^{-d/2} \bigl(\frac{\lambda}{t-\lambda}\bigr)^{d/2} \bigl(\ln\tfrac{3t}{t-\lambda}\bigr)^d
N_t(H) \|\varphi\|_2^2$ improves on this in several regards; most notably, the factors
$\frac{\lambda}{E_0} $,\, $\bigl(\ln N_t(H)\bigr)^d$ and the second summand are removed
altogether.

(c) In \cite{bhv13}, a partition of $\R^d$ into cubes was used in the proof. We
will work with a ``continuous partition'' into balls instead; see the proof of
Lemma~\ref{E0-est}. Working with balls leads to a better constant $c_d$ in the
estimate.

(d) In the case $d=1$ and $H$ the Dirichlet Laplacian on an open subset of $\R$,
an improved estimate is given in \cite[Rem\.1.5]{bhv13}. For that estimate it
is crucial that $H$ is a direct sum of Dirichlet Laplacians on intervals. The
improvement is not possible for general $H$ in dimension $d=1$; this can be
seen similarly as in \cite[Example~1.8(3)]{bhv13}.
\end{remarks}

If $H$ has compact resolvent, then one can apply Theorem~\ref{l1-est} with
$t=(1+\eps)\lambda$ for any $\eps>0$ to obtain the following estimate. Note that it
contains the same factor $\lambda^{-d/2}$ as the lower bound of
Remark~\ref{l1-rem}(a).

\begin{corollary}\label{l1-cor}
Assume that $H$ has compact resolvent. Let $\varphi$ be an eigenfunction of $H$ with
eigenvalue $\lambda$. Then
\[
  \|\varphi\|_1^2 \le c_d C_\eps^d \lambda^{-d/2} N_{(1+\eps)\lambda}(H) \|\varphi\|_2^2 \qquad (\eps>0),
\]
with $c_d$ as in Theorem~\ref{l1-est} and $C_\eps = \eps^{-1/2}\ln(3+\rfrac3\eps)$.
\end{corollary}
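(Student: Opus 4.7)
The plan is to deduce the corollary directly from Theorem~\ref{l1-est} by a suitable choice of the parameter $t$. Since $H$ has compact resolvent, the essential spectrum of $H$ is empty, so $\inf\sigma_\ess(H)=+\infty$ (with the usual convention), and Theorem~\ref{l1-est} may be applied with any $t>\lambda$. The natural choice is $t=(1+\eps)\lambda$, which is always admissible.

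With this choice one has $t-\lambda=\eps\lambda$, so the factor $(t-\lambda)^{-d/2}$ in Theorem~\ref{l1-est} becomes $\eps^{-d/2}\lambda^{-d/2}$, and the logarithmic factor simplifies to
\[
  \ln\tfrac{3t}{t-\lambda}=\ln\tfrac{3(1+\eps)}{\eps}=\ln\bigl(3+\tfrac{3}{\eps}\bigr).
\]
Multiplying the two $\eps$-dependent quantities gives exactly
\[
  \eps^{-d/2}\bigl(\ln(3+\tfrac{3}{\eps})\bigr)^d=\bigl(\eps^{-1/2}\ln(3+\tfrac{3}{\eps})\bigr)^d=C_\eps^d,
\]
while the counting function becomes $N_{(1+\eps)\lambda}(H)$, yielding the stated inequality.

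There is essentially no obstacle beyond this bookkeeping: the only point that deserves a brief comment is why $t=(1+\eps)\lambda<\inf\sigma_\ess(H)$, which follows from the compactness of the resolvent. Accordingly the proof amounts to at most a few lines.
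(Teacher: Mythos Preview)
Your proof is correct and is essentially identical to the paper's own argument: the paper states just before the corollary that one applies Theorem~\ref{l1-est} with $t=(1+\eps)\lambda$, and your computation of $(t-\lambda)^{-d/2}=\eps^{-d/2}\lambda^{-d/2}$ and $\ln\tfrac{3t}{t-\lambda}=\ln(3+\tfrac{3}{\eps})$ is exactly the intended bookkeeping.
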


\begin{remark}
The assumption that $H$ has compact resolvent is in particular satisfied if $\Omega$
has finite volume. Note that then the trivial estimate $\|\varphi\|_1^2 \le \vol(\Omega) \|\varphi\|_2^2$
holds. We point out that, up to a dimension dependent constant, the estimate of
Corollary~\ref{l1-cor} is never worse since one has the bound $N_t(H) \le
K_d\vol(\Omega)t^{d/2}$ for all $t>0$.
(To obtain this bound, apply \cite[Cor\.1]{liya83} to open sets $\tilde\Omega \supseteq \Omega$
and note that $e^{-tH} \le e^{-t\Delta_{\tilde\Omega}}$, where $\Delta_{\tilde\Omega}$ denotes the
Dirichlet Laplacian on $\tilde\Omega$.)
\end{remark}

Our second main result is the following heat kernel estimate for semigroups
dominated by the free heat semigroup. This estimate is obtained as a by-product
of the preparations for the proof of Theorem~\ref{l1-est}.

\begin{theorem}\label{hke}
For all $t>0$ the semigroup operator $e^{-tH}$ has an integral kernel~$p_t$.
If $E_0 := \inf\sigma(H) > 0$ then
\[
  0 \le p_t(x,y) \le \left( \frac{eE_0}{2\pi d} \right)^{\!d/2}
                 \exp\left( \!-E_0t-\frac{|x-y|^2}{4t} \right)
                 \qquad \bigl(t \ge \tfrac{d}{2E_0},\ x,y\in\R^d\bigr).
\]
\end{theorem}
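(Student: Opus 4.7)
The plan is to apply the Davies approach for Gaussian heat kernel bounds, tuning the free parameters so that the prefactor reflects the spectral gap $E_0$. Existence of a kernel $0\le p_t(x,y)\le k_t(x,y):=(4\pi t)^{-d/2}e^{-|x-y|^2/(4t)}$ for $t>0$ is standard once the semigroup $e^{-tH}$ is dominated by the free heat semigroup.

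The central objects are the twisted semigroups $M_\alpha e^{-tH}M_{-\alpha}$ for $\alpha\in\R^d$, where $M_\alpha$ denotes multiplication by $e^{\alpha\cdot x}$; their integral kernels are $e^{\alpha\cdot(x-y)}p_t(x,y)$, pointwise bounded (from $p_t\le k_t$) by $e^{\alpha\cdot(x-y)}k_t(x,y)$. A direct Gaussian integration (complete the square) gives
\[
  \int \bigl(e^{\alpha\cdot(x-z)}k_t(x,z)\bigr)^2 dz = (8\pi t)^{-d/2}e^{2t|\alpha|^2},
\]
and consequently both $\|M_\alpha e^{-tH}M_{-\alpha}\|_{2\to\infty}$ and $\|M_\alpha e^{-tH}M_{-\alpha}\|_{1\to 2}$ are bounded by $(8\pi t)^{-d/4}e^{t|\alpha|^2}$.

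The key technical ingredient, and the step I expect to be the main obstacle in this level of generality, is the Davies-type $L_2$-bound
\[
  \|M_\alpha e^{-tH}M_{-\alpha}\|_{2\to 2}\le e^{-E_0 t+t|\alpha|^2}.
\]
I would derive it from the fact that semigroup domination by $e^{t\Delta}$ forces form domination of $h$ (the form of $H$) by the Dirichlet form, i.e.\ $h(f,f)\ge\|\nabla f\|_2^2$ on the form domain. A direct computation then yields $\Re\langle M_\alpha HM_{-\alpha}f,f\rangle = h(f,f)-|\alpha|^2\|f\|_2^2$, which by $h(f,f)\ge E_0\|f\|_2^2$ is $\ge(E_0-|\alpha|^2)\|f\|_2^2$; the norm bound then follows from Lumer--Phillips applied to the sectorial operator $M_\alpha HM_{-\alpha}$.

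Combining via the factorization $M_\alpha e^{-TH}M_{-\alpha}=(M_\alpha e^{-sH}M_{-\alpha})(M_\alpha e^{-(T-2s)H}M_{-\alpha})(M_\alpha e^{-sH}M_{-\alpha})$ and $\|ABC\|_{1\to\infty}\le\|A\|_{2\to\infty}\|B\|_{2\to 2}\|C\|_{1\to 2}$ yields
\[
  e^{\alpha\cdot(x-y)}p_T(x,y)\le (8\pi s)^{-d/2}e^{T|\alpha|^2-E_0(T-2s)}.
\]
The optimal choice $s=d/(4E_0)=T_0/2$ (which requires $T\ge T_0$ so that $T-2s\ge 0$) evaluates the $s$-dependent part to $(eE_0/(2\pi d))^{d/2}e^{-E_0 T+T|\alpha|^2}$. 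Finally, minimizing the exponent $T|\alpha|^2-\alpha\cdot(x-y)$ with $\alpha=(x-y)/(2T)$ gives $-|x-y|^2/(4T)$, producing the stated Gaussian upper bound.
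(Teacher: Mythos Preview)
Your overall architecture---twisted operators $M_\alpha e^{-tH}M_{-\alpha}$, the three-piece factorisation into $L_1\!\to\! L_2$, $L_2\!\to\! L_2$, $L_2\!\to\! L_\infty$ pieces, and the optimisation in $s$ and $\alpha$---is exactly what the paper does (Theorem~\ref{res-est} followed by the short proof of Theorem~\ref{hke}), and your off-diagonal bounds and final arithmetic are correct.

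The substantive difference is the $L_2\!\to\! L_2$ twisted bound, and you are right to flag it as the delicate point. Your proposed identity $\Re\langle M_\alpha HM_{-\alpha}f,f\rangle = h(f,f)-|\alpha|^2\|f\|_2^2$ is only formal in this generality. First, $M_{-\alpha}f$ need not lie in $L_2$, let alone the form domain, so one must replace $\alpha\cdot x$ by bounded Lipschitz $\psi$ and approximate. Second, and more seriously, writing $\sigma(u,v):=\tau(u,v)-\langle\nabla u,\nabla v\rangle$ for the residual part of the form, your identity tacitly asserts $\sigma(e^{-\psi}f,e^{\psi}f)=\sigma(f,f)$. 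Form domination $h(f,f)\ge\|\nabla f\|_2^2$ only gives $\sigma\ge0$ on the diagonal; the multiplicative identity requires the \emph{locality} of $\sigma$, a separate nontrivial fact (proved in the paper as Lemma~\ref{sigma} via a positive-measure representation). Without it your Lumer--Phillips step does not close.

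The paper sidesteps both issues. Instead of form methods it obtains $\|M_\alpha e^{-tH}M_{-\alpha}\|_{2\to2}\le e^{(|\alpha|^2-E_0)t}$ by a Phragm\'en--Lindel\"of argument (Proposition~\ref{weighted-est}): from the analytic bound $\|T(z)\|\le e^{-E_0\Re z}$ on $\C_+$ and the weighted bound $\|M_\alpha T(t)M_{-\alpha}\|\le e^{|\alpha|^2t}$ inherited directly from domination by $e^{t\Delta}$, a three-lines type argument produces the sharp product estimate. This route uses no structural information about the form, so in particular Lemma~\ref{sigma} plays no role in the proof of Theorem~\ref{hke}.
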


\begin{remark}
(a) For $0 < t < \frac{d}{2E_0}$ one just has the estimate with respect to the
free heat kernel,
\[
  0 \le p_t(x,y) \le (4\pi t)^{-d/2} \smash[t]{\exp\left( \!-\frac{|x-y|^2}{4t} \right)}.
\]
In combination with Theorem~\ref{hke} this gives
\[
  0 \le p_t(x,y) \le (4\pi t)^{-d/2} \left( 1 + \frac{2e}{d} E_0t \right)^{\!d/2}
                 \exp\left( \!-E_0t-\frac{|x-y|^2}{4t} \right)
                 \qquad (t>0).
\]
(In the case $E_0=0$ this estimate is true but inconsequential.)

(b) In \cite[formula~(22)]{ouh06}, the following estimate was proved in the
framework of Dirichlet Laplacians with absorption potentials on open subsets of
$\R^d$:
\[
  p_t(x,y) \le c_\eps (4\pi t)^{-d/2}
             \left( 1 + \frac12 E_0t + \eps\frac{|x-y|^2}{8t} \right)^{\!d/2}
             \exp\left( \!-E_0t-\frac{|x-y|^2}{4t} \right),
\]
where $\eps>0$ and $c_\eps = e^2 \smash{(1+\rfrac1\eps)^{d/2}}$. Part~(a) shows that the
summand $\eps\smash{\frac{|x-y|^2}{8t}}$ is actually not needed, which may come as
a surprise.

(c) In the generality of our setting, the estimate provided in
Theorem~\ref{hke} is probably the best one can hope for. Suppose, for example,
that the semigroup~$T$ is irreducible and that $E_0$ is an isolated eigenvalue
of $H$. Then the large time behaviour of $p_t$ is known:
\[
  e^{E_0t} p_t(x,y) \to \varphi(x)\varphi(y) \qquad (t\to\infty),
\]
where $\varphi$ is the non-negative normalized ground state of $H$; see, e.g.,
\cite[Thm\.3.1]{klvw13}. Moreover, if $\inf\sigma(H)=1$ then $E^{d/4} \varphi(E^{1/2}\cdot)$ is the
ground state of an appropriately scaled operator $H_E$ with $\inf\sigma(H_E)=E$.
This explains the factor $E_0^{d/2}$ in our estimate.

Note, however, that better estimates are known for Dirichlet Laplacians under
suitable geometric assumptions on the domain $\Omega$. Then a boundary term like
$\varphi(x)\varphi(y)$ can be included in the estimate. This can be shown via intrinsic
ultracontractivity as in \cite{ouwa07}.
\end{remark}

An important application of Corollary~\ref{l1-cor} is that it allows us to
compare the ``heat content'' of $H$ with its ``heat trace''. We assume that $H$
has compact resolvent, with $(\lambda_k)$ the increasing sequence of all the
eigenvalues of $H$, repeated according to their multiplicity. For $t>0$ we
denote by $Q_H(t) := \|e^{-tH}\ind_\Omega\|_1$ the \emph{heat content}, by $Z_H(t) :=
\sum_{k=1}^\infty e^{-t\lambda_k}$ the \emph{heat trace} of~$H$.

Note that $Q_H$, $Z_H$ are decreasing functions.
It may well occur that $Q_H(t) = \infty$ and/or $Z_H(t) = \infty$
for some but not all $t>0$ if $\Omega$ has infinite Lebesgue measure,
see \cite[Thm\.5.5]{beda89}.

\begin{theorem}\label{heat-trace}
Assume that $H$ has compact resolvent and that $Z_H(t_0) < \infty$ for some $t_0>0$.
Then $Q_H(t) < \infty$ for all $t>2t_0$,
\[
  Q_H(t) \le c_{\eps,d} \lambda_1^{-d/2} Z_H\bigl(\tfrac{t}{2+\eps}\bigr)^2 \qquad \bigl(0<\eps<\tfrac{t}{t_0}-2\bigr),
\]
with $c_{\eps,d} = c_d C_\eps^d$ as in Corollary~\ref{l1-cor}.
\end{theorem}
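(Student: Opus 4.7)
My plan is to exploit the spectral expansion of $H$. Let $(\varphi_k)$ be an $L_2$-orthonormal basis of eigenfunctions of $H$ with eigenvalues $\lambda_k$. Since $\ind_\Omega$ need not lie in $L_2(\Omega)$, I would work with the truncations $f_R:=\ind_{\Omega\cap B(0,R)}\in L_2(\Omega)$ and start from the spectral identity
\[
  \langle e^{-tH} f_R, f_S\rangle \;=\; \sum_k e^{-t\lambda_k}\langle f_R,\varphi_k\rangle\langle\varphi_k,f_S\rangle \qquad (R,S>0).
\]
As $R,S\to\infty$, monotone convergence identifies the left-hand side with $\int_\Omega\!\int_\Omega p_t(x,y)\,dy\,dx=Q_H(t)\in[0,\infty]$; by Corollary~\ref{l1-cor} each $\varphi_k$ lies in $L_1(\Omega)$, and dominated convergence gives $\langle f_R,\varphi_k\rangle\to\int_\Omega\varphi_k$. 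So the target intermediate identity is
\[
  Q_H(t) \;=\; \sum_k e^{-t\lambda_k}\Bigl|\int_\Omega\varphi_k\Bigr|^2 \;\le\; \sum_k e^{-t\lambda_k}\|\varphi_k\|_1^2,
\]
and the whole theorem reduces to estimating the last sum.

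Feeding in Corollary~\ref{l1-cor} (with $\|\varphi_k\|_2=1$) and using $\lambda_k\ge\lambda_1$, I bound this sum by $c_dC_\eps^d\lambda_1^{-d/2}\sum_k e^{-t\lambda_k} N_{(1+\eps)\lambda_k}(H)$. The key elementary observation for reshaping the remaining double sum into $Z_H(t/(2+\eps))^2$ is that on the set of pairs $(j,k)$ satisfying $\lambda_j\le(1+\eps)\lambda_k$ one has $\lambda_j+\lambda_k\le(2+\eps)\lambda_k$, hence $e^{-t\lambda_k}\le \exp\bigl(-\tfrac{t}{2+\eps}(\lambda_j+\lambda_k)\bigr)$. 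Writing the counting function as a sum over~$j$ and dropping the constraint then yields the decoupled bound
\[
  \sum_k e^{-t\lambda_k} N_{(1+\eps)\lambda_k}(H) \;=\; \sum_{\substack{j,k\\ \lambda_j\le(1+\eps)\lambda_k}} e^{-t\lambda_k} \;\le\; \sum_{j,k} e^{-t(\lambda_j+\lambda_k)/(2+\eps)} \;=\; Z_H\bigl(\tfrac{t}{2+\eps}\bigr)^{\!2},
\]
which is finite because the hypothesis $\eps<t/t_0-2$ forces $t/(2+\eps)>t_0$ and $Z_H$ is decreasing.

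The main obstacle is justifying the target identity for $Q_H(t)$, since $\ind_\Omega\notin L_2(\Omega)$ in general and Parseval is not directly available. My way around this chicken-and-egg issue is to note that the previous paragraph supplies, completely independently of any finiteness statement about $Q_H$, the unconditional bound $\sum_k e^{-t\lambda_k}\|\varphi_k\|_1^2\le c_{\eps,d}\lambda_1^{-d/2}Z_H(t/(2+\eps))^2<\infty$. Together with the uniform pointwise estimate $|\langle f_R,\varphi_k\rangle\langle\varphi_k,f_S\rangle|\le\|\varphi_k\|_1^2$, this provides an $R,S$-independent summable dominant, so dominated convergence lets me pass $R,S\to\infty$ inside the spectral series; comparing with the monotone-convergence limit on the operator side then yields simultaneously $Q_H(t)<\infty$ and the claimed estimate.
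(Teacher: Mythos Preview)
Your proof is correct and follows essentially the same route as the paper: reduce $Q_H(t)$ to $\sum_k e^{-t\lambda_k}\|\varphi_k\|_1^2$ via an approximation of $\ind_\Omega$ by bounded $L_2$-functions, apply Corollary~\ref{l1-cor}, and convert the factor $N_{(1+\eps)\lambda_k}$ into $Z_H\bigl(\tfrac{t}{2+\eps}\bigr)^2$. The only packaging difference is that the paper carries out this last step via the Chebyshev-type bound $N_\lambda(H)\le Z_H(T)e^{T\lambda}$ of Lemma~\ref{Nlambda}, whereas you perform the equivalent double-sum rearrangement directly.
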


The proof is rather short, so we give it right here. We will use the following
simple estimate.

\begin{lemma}\label{Nlambda}
(cf.\ \cite[Lemma~5.2]{bhv13})
For $T,\lambda>0$ one has $N_\lambda(H) \le Z_H(T)e^{T\lambda}$.
\end{lemma}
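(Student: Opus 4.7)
The plan is to bound the heat trace from below by keeping only the contribution of those eigenvalues that do not exceed $\lambda$. This is the standard passage from a spectral sum to a counting bound, and it only uses that $e^{-T\,\cdot}$ is positive and decreasing.

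First I would write out $Z_H(T) = \sum_{k=1}^\infty e^{-T\lambda_k}$, with $\lambda_1\le\lambda_2\le\cdots$ the eigenvalues of $H$ repeated according to multiplicity; by the definition recalled just before the lemma, $N_\lambda(H)$ equals the cardinality of the (finite, since $\lambda<\inf\sigma_\ess(H)$ in the intended application) index set $\{k:\lambda_k\le\lambda\}$. Since all summands are non-negative, discarding those indices with $\lambda_k>\lambda$ only decreases the sum, giving
\[
  Z_H(T) \;\ge\; \sum_{\lambda_k\le\lambda} e^{-T\lambda_k}.
\]
Then, using that $e^{-T\lambda_k}\ge e^{-T\lambda}$ for each $k$ in the restricted range, the right-hand side is at least $N_\lambda(H)\,e^{-T\lambda}$. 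Multiplying through by $e^{T\lambda}$ yields the claim.

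There is essentially no obstacle — the argument is a two-line monotonicity estimate. The only thing worth a brief mention is that $N_\lambda(H)$ and $Z_H(T)$ are meaningful in the intended setting (in particular, $H$ has compact resolvent so that $Z_H$ is defined as a spectral sum, and $\lambda$ will lie below $\inf\sigma_\ess(H)=\infty$), so no case distinction or regularisation is needed.
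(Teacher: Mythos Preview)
Your argument is correct and is essentially identical to the paper's: the paper also drops all but the terms with $\lambda_k\le\lambda$ and uses $e^{-T\lambda_k}\ge e^{-T\lambda}$ (phrased there as $k \le e^{T\lambda_k}\sum_{j\le k} e^{-T\lambda_j} \le e^{T\lambda}Z_H(T)$ for any $k$ with $\lambda_k\le\lambda$). No substantive difference.
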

\begin{proof}
If $k\in\N$ is such that $\lambda_k\le\lambda$, then $k \le e^{T\lambda_k} \sum_{j=1}^k e^{-T\lambda_j} \le e^{T\lambda}
Z_H(T)$. Thus, $N_\lambda(H) = \#\sset{k}{\lambda_k\le\lambda} \le e^{T\lambda} Z_H(T)$.
\end{proof}

\begin{proof}[Proof of Theorem~\ref{heat-trace}]
Let $T := \frac{t}{2+\eps}$. Let $(\varphi_k)$ be an orthonormal basis of $L_2(\Omega)$ such
that $H\varphi_k = \lambda_k\varphi_k$ for all $k\in\N$. By Corollary~\ref{l1-cor} and
Lemma~\ref{Nlambda} we obtain
\[
  \|\varphi_k\|_1^2 \le c_{\eps,d} \lambda_k^{-d/2} N_{(1+\eps)\lambda_k}(H) \|\varphi_k\|_2^2
         \le c_{\eps,d} \lambda_1^{-d/2} Z_H(T) e^{T(1+\eps)\lambda_k}
\]
for all $k\in\N$. For $f\in L_2(\Omega)\cap L_\infty(\Omega)$ one has $e^{-tH}f = \sum_{k=1}^\infty \< f,\varphi_k\>
e^{-t\lambda_k}\varphi_k$ and hence
\[
  \|e^{-tH}f\|_1 \le \sum_{k=1}^\infty \|f\|_\infty e^{-t\lambda_k} \|\varphi_k\|_1^2\,.
\]
Using a sequence $(f_k)$ in $L_2(\Omega)$ with $0 \le f_k \uparrow \ind_\Omega$ and recalling $T(1+\eps) =
t-T$, we conclude that
\[
  \|e^{-tH}\ind_\Omega\|_1 \le \sum_{k=1}^\infty e^{-t\lambda_k} \|\varphi_k\|_1^2
                \le c_{\eps,d} \lambda_1^{-d/2} Z_H(T) \sum_{k=1}^\infty e^{-T\lambda_k}
                = c_{\eps,d} \lambda_1^{-d/2} Z_H(T)^2. \qedhere
\]
\end{proof}

The paper is organized as follows. In Section~\ref{sec_sgs} we investigate
properties of selfadjoint positivity preserving semigroups dominated by the
free heat semigroup. In Section~\ref{sec_hke} we prove Theorem~\ref{hke}, and
we show off-diagonal resolvent estimates needed in the proof of
Theorem~\ref{l1-est}, which in turn is given in Section~\ref{sec_l1-est}.

\section{Semigroups dominated by the free heat semigroup}\label{sec_sgs}

Throughout this section let $\Omega\subseteq\R^d$ be measurable, and let $T$ be a selfadjoint
positivity preserving $C_0$-semi\-group on $L_2(\Omega)$ that is dominated by the free
heat semigroup, with generator~$-H$. Let $\tau$ be the closed symmetric form
associated with $H$. The purpose of this section is to collect some basic
properties of $\tau$ and~$H$.

It is crucial that $D(\tau)$ is a subset of $H^1(\R^d)$ (in fact an ideal; see, e.g.,
\cite[Cor\.4.3]{mvv05}). Thus we can define a symmetric form $\sigma$ by
\begin{equation}\label{sigma-def}
  \sigma(u,v) := \tau(u,v) - \<\nabla u,\nabla v\> \qquad \bigl(u,v \in D(\sigma) := D(\tau)\bigr).
\end{equation}
This gives a decomposition of the form $\tau$ as the standard Dirichlet form plus
a form $\sigma$ that is positive and local in the sense of the following lemma. If
$-H$ is the Dirichlet Laplacian with an absorption potential $V\ge0$ on an open
set $\Omega\subseteq\R^d$, then $\sigma(u,v) = \int Vu\widebar{v}$. In this case the next three
results are trivial.

\begin{lemma}\label{sigma-pos}
Let $0 \le u,v \in D(\tau)$. Then $\sigma(u,v) \ge 0$, and $\sigma(u,v)=0$ if $u\wedge v=0$.
\end{lemma}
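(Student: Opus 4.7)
The plan is to deduce both assertions from the Ouhabaz domination criterion applied to the pair $(\tau,\tau_0)$, where $\tau_0(u,v):=\langle\nabla u,\nabla v\rangle$ is the free Dirichlet form. Since $T(t)\le e^{t\Delta}$ and both semigroups are positivity preserving, this criterion supplies, in addition to the ideal property of $D(\tau)$ already cited from \cite{mvv05}, the inequality $\tau(u,v)\ge\langle\nabla u,\nabla v\rangle$, that is $\sigma(u,v)\ge 0$, whenever $0\le v\le u$ both lie in $D(\tau)$. In particular $\sigma(u,u)\ge 0$ for $0\le u\in D(\tau)$.

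To upgrade from ordered pairs to arbitrary $u,v\ge 0$ in $D(\tau)$, I apply the ordered-pair inequality to $0\le\alpha u\le\alpha u+v$ for $\alpha>0$: expanding $\sigma(\alpha u+v,\alpha u)\ge 0$ bilinearly and dividing by $\alpha$ yields $\alpha\,\sigma(u,u)+\sigma(v,u)\ge 0$, and letting $\alpha\to 0^+$ gives $\sigma(v,u)\ge 0$, hence $\sigma(u,v)\ge 0$ by symmetry.

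Now suppose additionally $u\wedge v=0$, so $u$ and $v$ have essentially disjoint supports; then $|u-v|=u+v$, and by Stampacchia's lemma $\nabla u=0$ a.e.\ on $\{u=0\}\supseteq\{v>0\}$ and symmetrically for $v$, so $\nabla u\cdot\nabla v=0$ a.e.\ and thus $\langle\nabla u,\nabla v\rangle=0$. Since $T$ is dominated by the Markovian free heat semigroup, $T$ is itself sub-Markovian and $\tau$ is a Dirichlet form, so the Markov contraction inequality $\tau(|f|,|f|)\le\tau(f,f)$ with $f=u-v$ expands to $\tau(u,v)\le 0$. Combining, $\sigma(u,v)=\tau(u,v)-\langle\nabla u,\nabla v\rangle\le 0$, which together with the positivity from the previous paragraph forces $\sigma(u,v)=0$.

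The main obstacle is to prepare the ingredients rather than to assemble them: one must invoke Ouhabaz's criterion in the exact form giving the ordered-pair inequality (this is natural in the framework of \cite{mvv05}) and observe that domination by the Markovian free heat semigroup makes $T$ itself sub-Markovian, so that the Markov contraction under $x\mapsto|x|$ and Stampacchia's lemma for $H^1$-functions are both applicable. With those in place the scaling trick of the second paragraph and the two-sided sign comparison of the third are essentially one-liners.
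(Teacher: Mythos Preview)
Your argument is correct and follows the same route as the paper. For the first assertion the paper simply cites \cite[Cor.\ 4.3]{mvv05}, which already yields $\sigma(u,v)\ge 0$ for arbitrary $0\le u,v\in D(\tau)$; your scaling trick passing from the ordered-pair form of Ouhabaz's criterion to the unordered one is a clean self-contained substitute for that citation. For the second assertion the paper sets $w:=u-v$ and uses $\tau(u,v)=\tau(w^+,w^-)\le 0$ from positivity preservation of $T$ (citing \cite[Cor.\ 2.6]{mvv05}); your version via $\tau(|f|)\le\tau(f)$ with $f=u-v$ is the same computation, since $\tau(|f|)\le\tau(f)$ is equivalent to $\tau(f^+,f^-)\le 0$. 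One small remark: this contraction under $|\cdot|$ already follows from positivity preservation of $T$ (the first Beurling--Deny criterion), so you do not need to pass through sub-Markovianity.
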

\begin{proof}
By \cite[Cor\.4.3]{mvv05}, the first assertion follows from the assumption that
$T$ is a positive semigroup dominated by the free heat semigroup.  For the
second assertion let $w := u-v$. Then $\tau(u,v) = \tau(w^+,w^-) \le 0$ since $T$ is a
positive semigroup (see, e.g., \cite[Cor\.2.6]{mvv05}). Since $\<\nabla u,\nabla v\> = 0$,
this implies $\sigma(u,v) \le 0$ and hence $\sigma(u,v)=0$.
\end{proof}

\begin{lemma}\label{multiplicator}
If $\xi \in W_{\!\infty}^1(\R^d)$ and $u \in D(\tau)$, then $\xi u \in D(\tau)$. Moreover, $f \from \R^d
\to D(\tau)$,\, $f(x) := \xi(\cdot-x)u$ is continuous.
\end{lemma}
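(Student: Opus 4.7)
The plan is to use the decomposition $\tau(v) = \|\nabla v\|_2^2 + \sigma(v)$ from~(\ref{sigma-def}) and handle each summand separately, treating the $\sigma$-part as the serious ingredient.

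For the first assertion, since $\xi \in W_\infty^1(\R^d)$ the Leibniz rule gives $\xi u \in H^1(\R^d)$ with $\nabla(\xi u) = \xi\,\nabla u + u\,\nabla\xi$; combined with the pointwise bound $|\xi u| \le \|\xi\|_\infty |u|$ and $\|\xi\|_\infty u \in D(\tau)$, the order-ideal property of $D(\tau)$ in $H^1(\R^d)$ forces $\xi u \in D(\tau)$.

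For the continuity, I would show convergence $f(x_n)\to f(x)$ as $x_n\to x$ in each component of the graph norm $\|\cdot\|_2^2 + \|\nabla\cdot\|_2^2 + \sigma(\cdot)$. A function in $W_\infty^1(\R^d)$ is Lipschitz, so $\|\xi(\cdot-x_n)-\xi(\cdot-x)\|_\infty \to 0$; this handles the $L_2$-term together with the summand $(\xi(\cdot-x_n)-\xi(\cdot-x))\nabla u$ that appears in $\nabla(f(x_n)-f(x))$. The remaining gradient summand $((\nabla\xi)(\cdot-x_n)-(\nabla\xi)(\cdot-x))u$ involves translations of the merely bounded function $\nabla\xi$; here I would approximate $u\in L_2$ by compactly supported functions, reducing to the classical $L_2$-continuity of translations of the localized $L_2$-function $(\nabla\xi)\ind_K$ on a sufficiently large ball $K$.

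The main obstacle is the $\sigma$-piece, since in our generality $\sigma$ is only accessible through the abstract properties of Lemma~\ref{sigma-pos}. The key target is the inequality
\[
  \sigma(\eta v) \le C\,\|\eta\|_\infty^2\,\sigma(v) \qquad \bigl(\eta \in L_\infty(\R^d;\R),\ v \in D(\tau)\bigr)
\]
with an absolute constant $C$; applied to $\eta := \xi(\cdot-x_n)-\xi(\cdot-x)$ and to the real and imaginary parts of $u$ separately, it immediately yields $\sigma(f(x_n)-f(x))\to 0$. To prove the target inequality, I would decompose $\eta = \eta^+-\eta^-$ and $v = v^+-v^-$ (with $v^\pm \in D(\tau)$ by the ideal property). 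The four products $\eta^\varepsilon v^\delta$ with $\varepsilon,\delta \in \{+,-\}$ are pairwise lattice-disjoint, so by Lemma~\ref{sigma-pos} all cross terms in the expansion of $\sigma(\eta v)$ vanish. Each surviving diagonal term $\sigma(\eta^\varepsilon v^\delta)$ with nonnegative factors is then controlled via the factorization
\[
  \|\eta^\varepsilon\|_\infty^2\,\sigma(v^\delta) - \sigma(\eta^\varepsilon v^\delta)
  = \sigma\bigl((\|\eta^\varepsilon\|_\infty-\eta^\varepsilon)v^\delta,\,(\|\eta^\varepsilon\|_\infty+\eta^\varepsilon)v^\delta\bigr) \ge 0,
\]
whose nonnegativity is again the positivity part of Lemma~\ref{sigma-pos}. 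Summing the four contributions delivers the target inequality and hence the continuity of $f$ in $D(\tau)$.
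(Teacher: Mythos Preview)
Your argument is correct, though it differs from the paper's in both halves of the continuity proof. For the $H^1$-part the paper writes
\[
  f(x)-f(0) = \xi(\cdot-x)\bigl(u-u(\cdot-x)\bigr) + (\xi u)(\cdot-x) - \xi u
\]
and thereby reduces everything to continuity of translations in $H^1(\R^d)$; this sidesteps the term $\bigl((\nabla\xi)(\cdot-x_n)-(\nabla\xi)(\cdot-x)\bigr)u$ that forces you into an approximation-and-localisation argument (which works, but note that the approximants of $u$ must be bounded so you can pull out $\|u_k\|_\infty$ before invoking $L_2$-continuity of translations of $(\nabla\xi)\ind_K$). For the $\sigma$-part both proofs establish essentially the same inequality $\sigma(\eta v)\le C\|\eta\|_\infty^2\,\sigma(v)$: the paper obtains $C=1$ in one line via $\sigma(w)=\sigma(|w|)$ for real $w$ (since $\sigma(w^+,w^-)=0$) together with monotonicity of $\sigma$ on the positive cone, whereas your four-term decomposition with pairwise lattice-disjoint products and vanishing cross terms yields $C=2$. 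Two small points to tighten: your target inequality should be stated for real $\eta\in W_\infty^1(\R^d)$ rather than $L_\infty$, since otherwise $\eta v$ need not lie in $D(\tau)$ and $\sigma(\eta v)$ is undefined (this is harmless for the application, as $\xi(\cdot-x_n)-\xi(\cdot-x)$ and its positive and negative parts are Lipschitz); and, as in the paper, one should first reduce to real-valued $\xi$ before invoking the inequality.
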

\begin{proof}
By \cite[Cor\.4.3]{mvv05}, $D(\tau)$ is an ideal of $H^1(\R^d)$. This implies the
first assertion $\xi u \in D(\tau)$ since $\xi u \in H^1(\R^d)$ and $|\xi u| \le \|\xi\|_\infty |u| \in
D(\tau)$.

For the second assertion it suffices to show continuity at $0$, and we can
assume without loss of generality that $\xi$, $u$ are real-valued. From the
identity
\[
  f(x) - f(0) = \xi(\cdot-x)\bigl(u-u(\cdot-x)\bigr) + (\xi u)(\cdot-x) - \xi u
\]
one deduces that $f \from \R^d \to H^1(\R^d)$ is continuous at $0$. By
Lemma~\ref{sigma-pos} we obtain
\[
  \sigma\bigl(f(x)-f(0)\bigr) = \sigma\bigl(|f(x)-f(0)|\bigr) \le \sigma\bigl(\|\xi(\cdot-x)-\xi\|_\infty|u|\bigr) \le \|\nabla\xi\|_\infty^2 |x|^2 \sigma(|u|).
  \avoidbreak
\]
Due to the decomposition~\eqref{sigma-def} this yields continuity of $f \from
\R^d \to D(\tau)$ at $0$.
\end{proof}

\begin{lemma}\label{sigma}
Let $u,v\in D(\tau)$. Then $\sigma(\xi u,v) = \sigma(u,\xi v)$ for all $\xi \in W_{\!\infty}^1(\R^d)$.
\end{lemma}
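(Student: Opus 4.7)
The plan is to reduce to the non-negative real case via the ideal structure of $D(\tau)$, and then exploit positivity and locality of $\sigma$ to realize both sides as integrals against a common ``energy measure''.

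First I would use sesquilinearity to reduce to real-valued $\xi, u, v$: split each into real and imaginary parts and expand. Then, since $D(\tau)$ is an ideal of $H^1(\R^d)$, one has $u^\pm, v^\pm \in D(\tau)$; and since positive parts of Lipschitz functions are Lipschitz, $\xi^\pm \in W^1_\infty(\R^d)$. Bilinearity of $\sigma$ and linearity in $\xi$ then reduce the identity to the case $\xi, u, v \ge 0$. As an easy sanity check for disjoint supports, if $u \wedge v = 0$ and $\xi \ge 0$, then $(\xi u)\wedge v = 0$ and $u \wedge (\xi v) = 0$ pointwise, so both sides of the claimed identity vanish by Lemma~\ref{sigma-pos}.

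For the main step with $u, v, \xi \ge 0$, I would consider the functional $\Lambda_{u,v}\from \xi \mapsto \sigma(\xi u, v)$ on the cone of non-negative functions in $W^1_\infty(\R^d)$. By Lemma~\ref{sigma-pos} this functional is additive, positively homogeneous, and non-negative; by the same lemma it is local in the sense that $\Lambda_{u,v}(\xi) = 0$ whenever $\xi$ vanishes on $\{u>0\}\cap\{v>0\}$, since then $(\xi u)\wedge v = 0$. Combining the Cauchy-Schwarz inequality for the positive form $\sigma$ with the estimate from Lemma~\ref{multiplicator} controlling $\|\xi u\|_{D(\tau)}$ in terms of $\|\xi\|_\infty + \|\nabla\xi\|_\infty$ and $\|u\|_{D(\tau)}$, one obtains a continuity bound of the form $\Lambda_{u,v}(\xi) \le C(u,v)\|\xi\|_\infty$. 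A standard Daniell--Stone / Riesz--Markov argument on $C_c(\R^d)$ then yields a positive Radon measure $\mu_{u,v}$ with $\Lambda_{u,v}(\xi) = \int\xi\,d\mu_{u,v}$ for $\xi \in C_c^\infty(\R^d)$, which extends to $W^1_\infty(\R^d)$ by density and continuity.

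To conclude, I would invoke the symmetry $\sigma(a,b) = \sigma(b,a)$ on real arguments to identify $\mu_{u,v} = \mu_{v,u}$. Indeed, by symmetry $\Lambda_{u,v}(\xi) = \sigma(\xi u, v) = \sigma(v, \xi u)$ and $\Lambda_{v,u}(\xi) = \sigma(\xi v, u) = \sigma(u, \xi v)$; testing against $\xi \in C_c^\infty$ shows both measures are determined by the same form data because $\sigma(\xi u, v) = \sigma(v, \xi u) = \overline{\sigma(\xi u, v)}$ is real. Hence $\sigma(\xi u, v) = \int \xi\,d\mu_{u,v} = \int \xi\,d\mu_{v,u} = \sigma(\xi v, u) = \sigma(u, \xi v)$.

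The main obstacle is the rigorous passage in Step~2 from the positive additive functional $\Lambda_{u,v}$ on the lattice cone of non-negative $W^1_\infty$-functions to a genuine Radon measure, and verifying that the two a priori different measures $\mu_{u,v}$ and $\mu_{v,u}$ coincide. A shortcut avoiding the measure-theoretic framework would be to prove $\sigma(\xi u, v) = \sigma(\xi v, u)$ directly for $\xi \in C_c^\infty$ via a partition-of-unity approximation in which $\xi$ is replaced locally by constants (for which the identity is trivial), using the translation continuity from Lemma~\ref{multiplicator} to control the errors, and then to extend to $\xi \in W^1_\infty$ by density.
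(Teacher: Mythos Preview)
Your reduction to non-negative $u,v,\xi$ and the idea of representing $\xi\mapsto\sigma(\xi u,v)$ by a measure are fine, but the concluding step is circular. Symmetry of $\sigma$ only gives $\sigma(\xi u,v)=\sigma(v,\xi u)$ and $\sigma(\xi v,u)=\sigma(u,\xi v)$; neither of these relates $\mu_{u,v}$ to $\mu_{v,u}$. Your sentence ``both measures are determined by the same form data because $\sigma(\xi u,v)$ is real'' proves nothing beyond reality of the integrals, and the asserted equality $\int\xi\,d\mu_{u,v}=\int\xi\,d\mu_{v,u}$ is exactly the statement of the lemma. Separately representing the two linear functionals $\Lambda_{u,v}$ and $\Lambda_{v,u}$ by measures gains nothing: you still have to compare them, and there is no symmetry of $\sigma$ that does this for you. (As a side remark, your justification of the bound $\Lambda_{u,v}(\xi)\le C(u,v)\|\xi\|_\infty$ via Cauchy--Schwarz and Lemma~\ref{multiplicator} would only yield control by $\|\xi\|_\infty+\|\nabla\xi\|_\infty$; the $\|\xi\|_\infty$-bound does hold, but via positivity: $\sigma(\xi u,v)+\sigma((\|\xi\|_\infty-\xi)u,v)=\|\xi\|_\infty\,\sigma(u,v)$ with both summands non-negative.)

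The paper's proof fixes this by working with the \emph{bilinear} form $b(\varphi,\psi):=\sigma(\varphi u,\psi v)$ on $W^1_{\infty,0}(\R^d)\times W^1_{\infty,0}(\R^d)$ rather than two separate linear functionals. Positivity and the locality from Lemma~\ref{sigma-pos} (namely $b(\varphi,\psi)=0$ when $\spt\varphi\cap\spt\psi=\varnothing$) force the representing measure on $\R^d\times\R^d$ to live on the diagonal, giving a single measure $\mu$ with $b(\varphi,\psi)=\int\varphi\psi\,d\mu$. Then $\sigma(\xi u,v)=b(\xi,1)=\int\xi\,d\mu=b(1,\xi)=\sigma(u,\xi v)$ follows immediately, with no comparison of two a priori different measures needed. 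Your proposed ``shortcut'' of approximating $\xi$ by locally constant functions does not obviously close the gap either, since for a genuinely varying $\xi$ the identity for constants gives no information without already knowing some form of the multiplier property you are trying to prove.
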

\begin{proof}
Since $D(\tau)$ is a lattice, it suffices to show the assertion for $u,v\ge0$ and
real-valued $\xi$. Throughout the proof we consider only real-valued function
spaces. We define a bilinear form $b$ by
\[
  b(\varphi,\psi) := \sigma(\varphi u,\psi v) \qquad \bigl(\varphi,\psi \in D(b) := W_{\!\infty,0}^1(\R^d)\bigr).
\]
Then $b(\varphi,\psi)\ge0$ for $\varphi,\psi\ge0$ by Lemma~\ref{sigma-pos}. Now one can proceed
similarly as in \cite[proof of Thm\.4.1]{arwa03} to show that
\begin{equation}\label{b-repr}
  \sigma(\varphi u,\psi v) = \int \varphi \psi\,d\mu \qquad \bigl(\varphi,\psi \in W_{\!\infty,0}^1(\R^d)\bigr)
\end{equation}
for some finite positive Borel measure $\mu$ on $\R^d$ (depending of course on
$u,v$). We only sketch the argument: first one can extend $b$ to a continuous
bilinear form on $C_0(\R^d)$, by positivity. Then one uses the linearisation of
$b$ in $C_0(\R^d\times \R^d)'$ to obtain a finite Borel measure $\nu$ on $\R^d\times \R^d$ such that
$b(\varphi,\psi) = \int \varphi(x) \psi(y)\,d\nu(x,y)$ for all $\varphi,\psi \in W_{\!\infty,0}^1(\R^d)$. Finally, $\spt\nu
\subseteq \set{(x,x)}{x\in\R^d}$ since $b(\varphi,\psi)=0$ in the case $\spt\varphi \cap \spt\psi = \varnothing$, by
Lemma~\ref{sigma-pos}, and this leads to the asserted measure $\mu$.

To complete the proof, we show that the representation~\eqref{b-repr} is valid
for all $\varphi,\psi \in W_{\!\infty}^1(\R^d)$. Let $\chi\in C_\c^1(\R^d)$ such that $0\le\chi\le1$ and
$\chi|_{B(0,1)} = 1$. Then $u_n := \chi(\rfrac\cdot n)u \to u$ in $H^1(\R^d)$ as $n\to\infty$, and
$\sigma(u_n) \le \sigma(u)$ for all $n\in\N$ by Lemma~\ref{sigma-pos}. Therefore, $\limsup
\tau(u_n) \le \tau(u)$, and this implies $u_n\to u$ in $D(\tau)$. Applying~\eqref{b-repr} to
$\sigma\bigl(\chi(\rfrac\cdot n)\varphi u,\chi(\rfrac\cdot n)\psi v\bigr)$ and letting $n\to\infty$ we derive~\eqref{b-repr}
for any $\varphi,\psi \in W_{\!\infty}^1(\R^d)$. For real-valued $\xi \in W_{\!\infty}^1(\R^d)$ we now obtain
\[
  \sigma(\xi u,v) = \int \xi\,d\mu= \sigma(u,\xi v). \qedhere
\]
\end{proof}

In the proof of Theorem~\ref{l1-est} we will work with operators that are
subordinated to $H$ as follows. For an open set $U\subseteq\R^d$ let $H_U$ denote the
selfadjoint operator in $L_2(\Omega\cap U)$ associated with the form $\tau$ restricted to
$D(\tau)\cap H_0^1(U)$. (Observe that this form domain is dense in $L_2(\Omega\cap U)$.)

\begin{lemma}\label{HG-action}
Let $\varphi$ be an eigenfunction of $H$ with eigenvalue $\lambda$. Let $U$ be an open
subset of $\R^d$, and let $\xi \in W_{\!\infty}^2(\R^d)$,\, $\xi=0$ on $\R^d\setminus U$. Then $\xi\varphi \in
D(H_U)$ and
\[
  (H_U-\lambda)(\xi\varphi) = -2\nabla\xi\cdot\nabla\varphi - (\Delta\xi)\varphi.
\]
\end{lemma}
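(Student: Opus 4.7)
The plan is to verify the identity at the level of the form $\tau$ associated with $H$: once I establish $\xi\varphi \in D(\tau)\cap H_0^1(U)$ and
\[
  \tau(\xi\varphi,v) - \lambda\<\xi\varphi,v\> = \<-2\nabla\xi\cdot\nabla\varphi-(\Delta\xi)\varphi,v\>
  \qquad \bigl(v\in D(\tau)\cap H_0^1(U)\bigr),
\]
the first representation theorem for closed symmetric forms yields $\xi\varphi\in D(H_U)$ together with the asserted value of $(H_U-\lambda)(\xi\varphi)$. The right-hand side is a legitimate element of $L_2(\Omega\cap U)$ because $\nabla\xi,\Delta\xi\in L_\infty(\R^d)$ and $\varphi\in H^1(\R^d)$, and it is supported in $\overline U$ since $\xi$ is.

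For the domain condition, $\xi\varphi\in D(\tau)$ is immediate from Lemma~\ref{multiplicator}. To obtain $\xi\varphi \in H_0^1(U)$ I exploit that $\xi$ is Lipschitz and vanishes on $\R^d\setminus U$, so that $|\xi(x)|\le\|\nabla\xi\|_\infty\dist(x,\R^d\setminus U)$; truncating $\xi$ to the open set $\{\dist(\cdot,\R^d\setminus U)>1/n\}$ and then mollifying produces a sequence in $C_\c^\infty(U)$ converging to $\xi\varphi$ in $H^1(\R^d)$.

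The form computation is the heart of the argument. Assume $\xi$ real-valued (the general case follows by linearity, splitting $\xi$ into real and imaginary parts). Lemma~\ref{multiplicator} gives $\xi v\in D(\tau)$, so the eigenvalue equation yields $\tau(\varphi,\xi v)=\lambda\<\varphi,\xi v\>=\lambda\<\xi\varphi,v\>$. Using the decomposition $\tau=\<\nabla\cdot,\nabla\cdot\>+\sigma$ from~\eqref{sigma-def} and the product rule on both $\tau(\xi\varphi,v)$ and $\tau(\varphi,\xi v)$, the two occurrences of $\<\xi\nabla\varphi,\nabla v\>$ cancel in the difference, and Lemma~\ref{sigma} supplies $\sigma(\xi\varphi,v)=\sigma(\varphi,\xi v)$ so the $\sigma$-contributions cancel as well. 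What survives is
\[
  \tau(\xi\varphi,v)-\lambda\<\xi\varphi,v\> = \<(\nabla\xi)\varphi,\nabla v\> - \<\nabla\xi\cdot\nabla\varphi,v\>.
\]
Finally, $(\nabla\xi)\varphi\in H^1(\R^d;\R^d)$ with weak divergence $(\Delta\xi)\varphi+\nabla\xi\cdot\nabla\varphi$, and $v\in H^1(\R^d)$, so the standard integration-by-parts identity on $\R^d$ (valid because the product of two $H^1$-functions lies in $W^{1,1}(\R^d)$ with integral-zero derivative) gives $\<(\nabla\xi)\varphi,\nabla v\>=-\<(\Delta\xi)\varphi+\nabla\xi\cdot\nabla\varphi,v\>$; substituting produces the desired identity.

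The main obstacle is the $H_0^1(U)$-membership of $\xi\varphi$; the rest of the argument is a short form manipulation whose only nontrivial ingredient is the symmetry provided by Lemma~\ref{sigma}, with the final passage from form to operator being a routine invocation of Kato's representation theorem.
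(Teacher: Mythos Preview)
Your argument is correct and follows essentially the same route as the paper: subtract $(\tau-\lambda)(\varphi,\xi v)=0$ from $(\tau-\lambda)(\xi\varphi,v)$, cancel the $\sigma$-terms via Lemma~\ref{sigma}, and integrate the remaining $\<\varphi\nabla\xi,\nabla v\>$ by parts using $\varphi\nabla\xi\in H^1(\R^d)^d$. The only difference is cosmetic: you spell out the $H_0^1(U)$-membership of $\xi\varphi$ through the Lipschitz decay of $\xi$ near $\partial U$, whereas the paper simply asserts it from $\xi=0$ on $\R^d\setminus U$.
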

\begin{proof}
By Lemma~\ref{multiplicator} we have $\xi\varphi \in D(\tau)$.
Moreover, $\xi\varphi \in H_0^1(U)$ due to the assumption $\xi=0$ on $\R^d\setminus U$.
For $v \in D(\tau)\cap H_0^1(U)$ we have $\xi v \in D(\tau)\cap H_0^1(U)$ and
\[
  (\tau-\lambda)(\varphi,\xi v) = \< (H-\lambda)\varphi, \xi v \> = 0.
\]
Since $\sigma(\xi\varphi,v) = \sigma(\varphi,\xi v)$ by Lemma~\ref{sigma}, the
decomposition~\eqref{sigma-def} yields
\begin{align*}
(\tau-\lambda)(\xi\varphi,v) &= (\tau-\lambda)(\xi\varphi,v) - (\tau-\lambda)(\varphi,\xi v) \\
            &= \<\nabla(\xi\varphi),\nabla v\> - \<\nabla\varphi,\nabla(\xi v)\> = \<\varphi\nabla\xi,\nabla v\> - \<\nabla\varphi,v\nabla\xi\>.
\end{align*}
Now $\varphi\nabla\xi$ is in $H^1(\R^d)^d$ and $\nabla\cdot(\varphi\nabla\xi) = \nabla\varphi\cdot\nabla\xi + \varphi\Delta\xi$, so we conclude that
\[
  (\tau-\lambda)(\xi\varphi,v) = -\<2\nabla\varphi\cdot\nabla\xi+\varphi\Delta\xi,v\>
\]
for all $v \in D(\tau)\cap H_0^1(U)$, which proves the assertion.
\end{proof}

\section{Heat kernel estimates}\label{sec_hke}

In this section we prove Theorem~\ref{hke}, and we provide resolvent estimates
needed in the proof of Theorem~\ref{l1-est}. Throughout we denote
\[
  \C_+ := \sset{z\in\C}{\Re z > 0}.
\] We point out that in the following result $T$ is not required to be a
semigroup.

\begin{proposition}\label{weighted-est}
Let $(\Omega,\mu)$ be a measure space, and let $\rho \from \Omega\to\R$ be measurable. Let $\lambda\in\R$,
and let $T \from \C_+\to\Ls(L_2(\mu))$ be analytic, $\|T(z)\| \le e^{-\lambda\Re z}$ for all
$z\in\C_+$. Assume that there exists $C\ge0$ such that
\[
  \|e^{\alpha\rho}T(t)e^{-\alpha\rho}\| \le Ce^{\alpha^2t} \qquad (\alpha,t>0).
\]\smallbds
Then
\[
  \|e^{\alpha\rho}T(z)e^{-\alpha\rho}\| \le \exp\bigl(\alpha^2/\Re\rfrac1z-\lambda\Re z\bigr) \qquad (\alpha>0,\ z\in\C_+),
\]
in particular, $\|e^{\alpha\rho}T(t)e^{-\alpha\rho}\| \le e^{\alpha^2t-\lambda t}$ for all $\alpha,t>0$.
\end{proposition}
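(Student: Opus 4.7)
The plan is a Phragmén--Lindel\"of (three-line-type) argument on the right half-plane after reducing the operator estimate to a scalar-holomorphic one.

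First, fix $\alpha>0$ and let $\mathcal D_\alpha := \sset{u\in L_2(\mu)}{e^{\alpha\rho}u,\,e^{-\alpha\rho}u\in L_2(\mu)}$, which is dense in $L_2(\mu)$. For $u,v\in\mathcal D_\alpha$, set
\[
  h(z) := \<T(z)e^{-\alpha\rho}u,\, e^{\alpha\rho}v\> \qquad (z\in\C_+).
\]
Operator-analyticity of $T$ makes $h$ holomorphic on $\C_+$, and the claimed operator bound is equivalent to the scalar estimate $|h(z)|\le\exp(\alpha^2/\Re(1/z)-\lambda\Re z)\|u\|_2\|v\|_2$ after taking suprema. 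Two starting estimates are at hand: the weighted hypothesis gives $|h(t)|\le Ce^{\alpha^2 t}\|u\|_2\|v\|_2$ on $\R_+$, while Cauchy--Schwarz combined with $\|T(z)\|\le e^{-\lambda\Re z}$ gives $|h(z)|\le e^{-\lambda\Re z}\|e^{-\alpha\rho}u\|_2\|e^{\alpha\rho}v\|_2$ throughout $\C_+$.

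Next I would apply Phragmén--Lindel\"of in the open upper-right quadrant by passing to the upper half-plane via the conformal map $w=z^2$. This map sends the positive real axis to itself (where the first estimate lives) and the positive imaginary axis to the negative real axis (where the second estimate is, in the limit $\Re z\to0^+$, independent of $\Re z$). Setting $\widetilde h(w):=h(\sqrt w)$, one has that $\log|\widetilde h|$ is subharmonic on the upper half-plane with sub-exponential growth at infinity controlled by $e^{-\lambda\Re\sqrt w}$. A Poisson--Nevanlinna representation for $\log|\widetilde h|$ then gives
\[
  \log|\widetilde h(w)| \le \frac{\Im w}{\pi}\int_\R \frac{\log|\widetilde h(s)|}{(\Re w-s)^2+(\Im w)^2}\,ds \;+\; \text{(growth correction at }\infty\text{)}.
\]
The crucial calculation is that the harmonic extension of $s\mapsto\sqrt s\,\ind_{\{s>0\}}$ to the upper half-plane equals $\Re\sqrt w=\Re z$: inserting this identity into the Poisson integral converts the $\alpha^2\sqrt s$ exponent from the boundary hypothesis into precisely $\alpha^2|z|^2/\Re z=\alpha^2/\Re(1/z)$ in the interior, while the growth correction coming from $e^{-\lambda\Re z}$ contributes the $-\lambda\Re z$ term. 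The lower-right quadrant is treated symmetrically by reflection $z\mapsto\bar z$, and taking suprema over unit vectors in $\mathcal D_\alpha$ upgrades the scalar bound to the claimed operator-norm estimate; the ``in particular'' statement follows by specializing $z=t\in\R_+$.

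The main obstacle is this Poisson-kernel calculation: the exponent $\alpha^2|z|^2/\Re z$ is subharmonic but \emph{not} the real part of any holomorphic function on $\C_+$, so one cannot simply multiply $h$ by a holomorphic exponential to reduce to a bounded Phragmén--Lindel\"of problem. The argument therefore truly hinges on the specific boundary-value identity for $\sqrt s\,\ind_{\{s>0\}}$, i.e.\ on the choice $w=z^2$ of the conformal map. A secondary point is tracking the constant $C$ and the weighted norms $\|e^{\pm\alpha\rho}u\|_2$, $\|e^{\pm\alpha\rho}v\|_2$ through the Poisson integral so that only $\|u\|_2\|v\|_2$ (and no residual $C$) appears in the final operator bound; this amounts to exploiting that the relevant harmonic measure of the imaginary-axis boundary piece tends to zero as one approaches the real axis from the interior.
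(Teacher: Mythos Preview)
Your ``crucial calculation'' contains a concrete error. You correctly observe that the Poisson extension of $s\mapsto\sqrt{s}\,\ind_{\{s>0\}}$ from $\partial\{\Im w>0\}$ to the upper half-plane is $\Re\sqrt{w}=\Re z$, but then you assert that this equals $|z|^2/\Re z=1/\Re(1/z)$. For non-real $z$ these differ: one has $\Re z<|z|^2/\Re z$, and in fact $|z|^2/\Re z$ is strictly \emph{subharmonic} on $\C_+$ (its Laplacian is $2|z|^2/(\Re z)^3>0$), so it cannot be the Poisson extension of anything. Your argument, if carried through, would yield
\[
  |h(z)|\le K^{1-\theta(z)}\bigl(C\|u\|_2\|v\|_2\bigr)^{\theta(z)}\,e^{\alpha^2\Re z},
\]
with $K=\|e^{-\alpha\rho}u\|_2\,\|e^{\alpha\rho}v\|_2$ and $\theta(z)\in(0,1)$ the harmonic measure of $(0,\infty)$ at $w=z^2$. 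This bound is correct for each fixed $u,v$, but useless for the operator norm: $K$ is unbounded over unit vectors $u,v\in\mathcal D_\alpha$, and the factor $K^{1-\theta(z)}$ does not disappear away from the real axis. That this must fail is clear from the example $T(z)=e^{z\Delta}$ on $L_2(\R^d)$ with $\rho(x)=x_1$, for which a direct Fourier computation gives exactly $\|e^{\alpha\rho}T(z)e^{-\alpha\rho}\|=\exp(\alpha^2/\Re(1/z))$, strictly larger than $e^{\alpha^2\Re z}$ off the real axis.

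The paper avoids this obstacle by a different device: rather than fixing the weight, it lets the weight depend holomorphically on $z$. One tests against $f,g$ on whose supports $\rho$ is bounded (so all weights make sense), and considers
\[
  F(z):=e^{\lambda z-\alpha^2/z}\bigl\langle e^{\alpha\rho/z}T(z)e^{-\alpha\rho/z}f,\,g\bigr\rangle.
\]
The point is that the compensating factor $e^{-\alpha^2/z}$ is now \emph{holomorphic}, and on the real axis the hypothesis applied with parameter $\alpha/t$ yields exactly $Ce^{(\alpha/t)^2t}=Ce^{\alpha^2/t}$, which cancels it. A Phragm\'en--Lindel\"of argument then gives $|F|\le1$ on $\C_+$; since $e^{\alpha\rho/z}$ differs from $e^{\alpha\rho\,\Re(1/z)}$ by a unimodular factor, this produces the real-weight bound, and a final rescaling $\alpha\mapsto\alpha/\Re(1/z)$ gives the statement.
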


Here and in the following we denote
\[
  \|wBw^{-1}\| := \sup\set{\|wBw^{-1}f\|_2}{f\in L_2(\mu),\ \|f\|_2\le1,\ w^{-1}f\in L_2(\mu)}
\]
for an operator $B \in \Ls(L_2(\mu))$ and a measurable function $w \from \Omega\to(0,\infty)$.

\begin{proof}[Proof of Proposition~\ref{weighted-est}]
Observe that
\[
  M := \set{f\in L_2(\mu)}{\rho\ \text{bounded on}\ [f\ne0]}
\]
is dense in $L_2(\mu)$. Let $\alpha>0$, and let $f,g\in M$ with $\|f\|_2 = \|g\|_2 = 1$. Define
the analytic function $F \from \C_+\to\C$ by
\[
  F(z) := e^{\lambda z-\alpha^2/z} \< e^{\alpha\rho/z} T(z) e^{-\alpha\rho/z} f, g \> .
\]
Let $c>0$ such that $|\rho|\le c$ on $[f\ne0]\cup[g\ne0]$. Then
\[
  |F(z)| \le \|e^{\lambda z}T(z)\| \|e^{-\alpha\rho/z}f\|_2 \|e^{\alpha\rho/\bar z}g\|_2
         \le \exp(2\alpha c\Re\rfrac1z) \qquad (z\in\C_+),
\]
in particular $|F(t)| \le e^{2\alpha c}$ for all $t\ge1$. Moreover,
\[
  |F(t)| \le e^{\lambda t-\alpha^2/t} \|e^{\alpha\rho/t} T(t) e^{-\alpha\rho/t}\| \le e^{\lambda t-\alpha^2/t}\cdot Ce^{(\alpha/t)^2t} \le C e^{|\lambda|}
\]
for all $0<t<1$. Thus, $|F(z)| \le 1$ for all $z\in\C_+$ by the next
lemma, and this yields
\[
  \|e^{\alpha\rho/z}T(z)e^{-\alpha\rho/z}\| \le \exp\bigl(\alpha^2\Re\rfrac1z-\lambda\Re z\bigr) \qquad (\alpha>0,\ z\in\C_+).
\]
The assertion follows by replacing $\alpha$ with $\alpha/\Re\rfrac1z$.
\end{proof}

The following Phragm\'en-Lindel\"of type result is similar to
\cite[Prop\.2.2]{cosi08}.

\begin{lemma}\label{phl}
Let $F \from \C_+\to\C$ be analytic. Assume that there exist $c_1,c_2>0$ such that
\[
  |F(z)| \le \exp(c_1\Re\rfrac1z) \quad (z\in\C_+), \quad\qquad
  |F(t)| \le c_2                  \quad (t>0).
\]
Then $|F(z)| \le 1$ for all $z\in\C_+$.
\end{lemma}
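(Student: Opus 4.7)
The plan is to substitute $w = 1/z$ and set $\tilde F(w) := F(1/w)$, which reformulates the singular growth near $z=0$ as ordinary exponential growth near $w=\infty$. The hypotheses translate to $|\tilde F(w)| \le e^{c_1\Re w}$ on $\C_+$ and $|\tilde F(s)| \le c_2$ for $s>0$. An immediate observation is that $\Re w \to 0$ as $w \to iy$ from within $\C_+$, hence $\limsup_{w\to iy}|\tilde F(w)| \le 1$ along the imaginary axis. The strategy is a two-stage Phragm\'en--Lindel\"of argument.

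In the first stage I would apply the Phragm\'en--Lindel\"of principle for sectors on the first quadrant $S^+ = \{0<\arg w<\pi/2\}$, which has opening $\pi/2$ and hence critical growth $\exp(|w|^2)$. The bound $|\tilde F(w)| \le e^{c_1|w|}$ is of order~$1$, strictly sub-critical, while the two boundary estimates are $|\tilde F| \le c_2$ on $\R_+$ and $\limsup |\tilde F| \le 1$ on $i\R_+$. Sector-PL then yields $|\tilde F| \le \max(c_2,1)$ on $S^+$; an identical argument on the lower quadrant extends this bound to all of $\C_+$. (The case $c_2\le 1$ already finishes the proof here.)

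In the second stage, $\tilde F$ is now bounded on $\C_+$ with $\limsup|\tilde F(w)|\le 1$ as $w\to iy$, so classical half-plane Phragm\'en--Lindel\"of (equivalently the Poisson representation for $H^\infty(\C_+)$) yields $|\tilde F(w)|\le 1$ throughout $\C_+$. Undoing the substitution gives $|F(z)| = |\tilde F(1/z)| \le 1$ for all $z\in\C_+$, as required.

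The delicate point is the $\limsup$-only boundary control on $i\R_+$ in the first stage; I expect this to be absorbed by the standard formulation of sector-PL with $\limsup$ boundary conditions. As a fallback giving full rigor, one can first multiply by the regularizer $e^{-\eps w}$ for small $\eps>0$, whose modulus is $\le 1$ on $\C_+$ and which preserves sub-criticality (growth becomes $e^{(c_1-\eps)\Re w}$), then apply PL in its cleanest form and pass to the limit $\eps\to 0^+$.
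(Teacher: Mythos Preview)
Your proof is correct and follows essentially the same two-stage Phragm\'en--Lindel\"of strategy as the paper (quarter-planes first to get the bound $c_2\vee 1$, then the full half-plane to conclude $|F|\le 1$). The only difference is that you make the substitution $w=1/z$ explicit, converting the singular bound $\exp(c_1\Re\tfrac1z)$ near $z=0$ into standard order-$1$ growth at infinity so that the textbook sector-PL applies directly; the paper works in the $z$-variable and applies PL to the quadrants with the sub-critical growth condition implicitly imposed at the vertex $0$ rather than at infinity, which is equivalent via precisely your substitution.
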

\begin{proof}
Note that $\limsup_{z\to iy} |F(z)| \le 1$ for all $y\in\R\setminus\{0\}$.
Thus, $|F(z)| \le c_2\vee1$ for all $z\in\C_+$ by the
Phragm\'en-Lindel\"of principle applied to the sectors
$\set{z\in\C}{\Re z>0,\ \breakOK \Im z>0}$ and
$\set{z\in\C}{\Re z>0,\ \breakOK \Im z<0}$. Then an application of
the Phragm\'en-Lindel\"of principle to the sector $\C_+$ implies $|F| \le 1$ on
$\C_+$.
\end{proof}

In the next lemma we state a version of the well-known Davies' trick; cf.\
\cite[proof of Lemma~19]{dav95}. For the proof note that $\inf_{\xi\in\R^d}
\exp\bigl(|\xi|^2t-\xi\cdot x\bigr) = \exp\bigl(-\frac{|x|^2}{4t}\bigr)$ for all $t>0$,\, $x\in\R^d$.

\begin{lemma}\label{davies}
Let $\Omega\subseteq\R^d$ be measurable, and let $B$ be a positive operator on $L_2(\Omega)$. For
$\xi,x\in\R^d$ let $\rho_\xi(x) := e^{\xi x}$. Then for $t>0$ the following are equivalent:

\nr{i} $B \le e^{t\Delta}$,

\nr{ii} $\|\rho_\xi B\rho_\xi^{-1}\|_{1\to\infty} \le (4\pi t)^{-d/2} e^{|\xi|^2t}$ for all $\xi\in\R^d$.
\end{lemma}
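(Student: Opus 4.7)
The plan is to rewrite both conditions as pointwise bounds on the integral kernel of $B$ and to apply the identity $\inf_{\xi\in\R^d}\exp(|\xi|^2t-\xi\cdot x)=\exp(-|x|^2/(4t))$ noted just before the lemma. The key background fact I will use is that a bounded operator $B\from L_1(\Omega)\to L_\infty(\Omega)$ is represented by a kernel $b\in L_\infty(\Omega\times\Omega)$ with $\|B\|_{1\to\infty}=\esssup|b|$, and that conjugation by $\rho_\xi$ simply replaces the kernel $b(x,y)$ by $e^{\xi\cdot(x-y)}b(x,y)$. The free heat semigroup itself has the Gaussian kernel $k_t(x,y)=(4\pi t)^{-d/2}\exp(-|x-y|^2/(4t))$.

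For the direction (i)$\Rightarrow$(ii) I would observe that the domination $B\le e^{t\Delta}$ forces $B$ to admit a kernel $b_t$ with $0\le b_t\le k_t$ almost everywhere. The kernel of $\rho_\xi B\rho_\xi^{-1}$ is then bounded pointwise by $(4\pi t)^{-d/2}\exp(\xi\cdot(x-y)-|x-y|^2/(4t))$; maximizing $\xi\cdot z-|z|^2/(4t)$ over $z\in\R^d$ (the maximum $|\xi|^2t$ is attained at $z=2t\xi$) yields the constant bound $(4\pi t)^{-d/2}e^{|\xi|^2t}$, which is (ii).

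For (ii)$\Rightarrow$(i) I would first take $\xi=0$, giving $\|B\|_{1\to\infty}\le(4\pi t)^{-d/2}$, so $B$ is represented by a kernel $b_t\in L_\infty(\R^d\times\R^d)$. For each fixed $\xi$, (ii) translates into the almost everywhere bound $|b_t(x,y)|\le(4\pi t)^{-d/2}\exp(|\xi|^2t-\xi\cdot(x-y))$. Running $\xi$ through a countable dense subset of $\R^d$ and taking the pointwise infimum, the identity above yields $|b_t|\le k_t$ a.e.; since $B$ is positive (positivity preserving), $b_t\ge0$, hence $0\le b_t\le k_t$, which is (i). The one point that really requires care is this last step: the exceptional null set in the kernel bound depends on $\xi$, so one must restrict to a countable dense set of $\xi$ before passing to the infimum. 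Beyond this routine bookkeeping, the argument is just the standard Davies trick.
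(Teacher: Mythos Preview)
Your proposal is correct and follows exactly the approach the paper intends: the paper gives no formal proof but only the hint $\inf_{\xi\in\R^d}\exp(|\xi|^2t-\xi\cdot x)=\exp(-|x|^2/(4t))$, and your argument is the natural fleshing out of that hint via the kernel representation of $L_1\to L_\infty$ operators. The care you take with the countable dense set of $\xi$'s to handle the $\xi$-dependent null sets is appropriate and is the only point that needs comment beyond the routine computation.
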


In (i), the inequality $B \le e^{t\Delta}$ is meant in the sense of positivity
preserving operators, i.e.,
\[
  Bf \le e^{t\Delta}f \qquad \bigl(0 \le f \in L_2(\Omega)\bigr).
\]

The following result provides an estimate of the resolvent of $H$ by the free
resolvent. Together with Proposition~\ref{free-res} below this will be an
important stepping stone in the proof of Theorem~\ref{l1-est}.

\begin{theorem}\label{res-est}
Let $\Omega\subseteq\R^d$ be measurable, and let $T$ be a selfadjoint positive
$C_0$-semi\-group on $L_2(\Omega)$ that is dominated by the free heat semigroup. Let
$-H$ be the generator of\/ $T$, and let $E_0 := \inf \sigma(H)$. Then for all
$\eps\in(0,1]$ one has
\begin{align*}
  T(t) &\le \eps^{-d/2} e^{-(1-\eps)E_0t} e^{t\Delta} \qquad & (t\ge0), \\
(H-\lambda)^{-1} &\le \eps^{-d/2} \bigl((1-\eps)E_0-\lambda-\Delta\bigr)^{-1} & (\lambda < (1-\eps)E_0).
\end{align*}
\end{theorem}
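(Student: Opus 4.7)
My plan is to establish the semigroup bound first; the resolvent bound will then follow by Laplace transform. Indeed, since $\lambda<(1-\eps)E_0\le E_0$ both resolvents admit the Laplace representations
\[
  (H-\lambda)^{-1} = \int_0^\infty e^{\lambda t}\,T(t)\,dt, \quad
  \bigl((1-\eps)E_0-\lambda-\Delta\bigr)^{-1} = \int_0^\infty e^{(\lambda-(1-\eps)E_0)t}\,e^{t\Delta}\,dt,
\]
so the asserted operator inequality for the resolvents is just the integrated version of the one for the semigroups.

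For the semigroup bound I invoke Davies' trick (Lemma~\ref{davies}): for $\eps\in(0,1)$ the claim is equivalent to proving, for all $\xi\in\R^d$,
\[
  \|\rho_\xi T(t)\rho_\xi^{-1}\|_{1\to\infty}
  \le \eps^{-d/2}\,e^{-(1-\eps)E_0 t}\,(4\pi t)^{-d/2}\,e^{|\xi|^2 t},
\]
and the case $\eps=1$ is nothing but the domination hypothesis restated. The strategy is to split $T(t) = T(\eps t/2)\,T((1-\eps)t)\,T(\eps t/2)$, factor $\rho_\xi T(t)\rho_\xi^{-1}$ accordingly, bound the outer two factors in $L_1\to L_2$, respectively $L_2\to L_\infty$, from the domination hypothesis, and bound the middle factor in $L_2\to L_2$ with a sharp $e^{-(1-\eps)E_0 t}$ decay.

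For the outer factors, pointwise domination of positive kernels gives $\|\rho_\xi T(\eps t/2)\rho_\xi^{-1}\|_{p\to q} \le \|\rho_\xi e^{\eps t\Delta/2}\rho_\xi^{-1}\|_{p\to q}$, and $\rho_\xi e^{s\Delta}\rho_\xi^{-1}$ is convolution by a shifted Gaussian whose $L_1\to L_2$ and $L_2\to L_\infty$ norms are easily computed and both equal to $(4\pi\eps t)^{-d/4}\,e^{\eps t|\xi|^2/2}$. For the middle factor, kernel domination alone only yields the weight-free bound $\|\rho_\xi T(s)\rho_\xi^{-1}\|_{2\to 2}\le e^{|\xi|^2 s}$, which ignores the spectral gap. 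To inject the $E_0$-decay I will apply Proposition~\ref{weighted-est} with $\rho(x) := \xi\cdot x/|\xi|$,\; $\alpha := |\xi|$,\; $\lambda := E_0$ and $C := 1$; its hypothesis $\|T(z)\| \le e^{-E_0\Re z}$ on $\C_+$ is immediate from selfadjointness and $H\ge E_0$, and its conclusion is precisely the improved bound $\|\rho_\xi T(s)\rho_\xi^{-1}\|_{2\to 2}\le e^{(|\xi|^2-E_0)s}$.

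Multiplying the three factor norms together produces the total weight $e^{\eps t|\xi|^2/2 + (1-\eps)t|\xi|^2 + \eps t|\xi|^2/2} = e^{|\xi|^2 t}$, prefactor $(4\pi\eps t)^{-d/2} = \eps^{-d/2}(4\pi t)^{-d/2}$, and decay $e^{-(1-\eps)E_0 t}$, matching the displayed Davies bound exactly. The main delicate point will be the sharpness of the Gaussian computation on the outer factors: the coefficient of $|\xi|^2$ in their exponents must come out to be precisely $\eps t/2$ on each side, since any larger coefficient would overshoot the $e^{|\xi|^2 t}$ required on the right-hand side of Davies' equivalence. Beyond this bookkeeping, every ingredient (domination, selfadjointness, Lemma~\ref{davies}, Proposition~\ref{weighted-est}) is already available from the preceding material.
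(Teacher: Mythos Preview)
Your proposal is correct and follows essentially the same route as the paper's own proof: the three-factor splitting $T(\eps t/2)T((1-\eps)t)T(\eps t/2)$, the use of Proposition~\ref{weighted-est} on the middle factor to inject the $E_0$-decay, the shifted-Gaussian computation for the $L_1\to L_2$ and $L_2\to L_\infty$ norms of the outer factors, and then Lemma~\ref{davies} to translate the weighted $1\to\infty$ bound back into the pointwise semigroup comparison, with the resolvent inequality obtained by Laplace transform. The only cosmetic difference is that the paper first records the $2\to2$ bound from domination and then appeals to Proposition~\ref{weighted-est}, whereas you package that step slightly differently via a fixed direction $\rho(x)=\xi\cdot x/|\xi|$; the content is identical.
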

\begin{proof}
As above let $\rho_\xi(x) := e^{\xi x}$. The assumptions imply $\|T(z)\|_{2\to2} \le e^{-E_0\Re z}$
for all $z\in\C_+$ and
\[
  \|\rho_\xi T(t) \rho_\xi^{-1}\|_{2\to2} \le \|\rho_\xi e^{t\Delta} \rho_\xi^{-1}\|_{2\to2} = e^{t|\xi|^2}
  \qquad (\xi\in\R^d,\ t\ge0).
\]
By Proposition~\ref{weighted-est} it follows that
\begin{equation}\label{2-2-est}
  \|\rho_\xi T(t) \rho_\xi^{-1}\|_{2\to2} \le e^{t|\xi|^2-E_0t} \qquad (\xi\in\R^d,\ t\ge0).
\end{equation}

Let $t>0$, and let $k_t$ be the convolution kernel of $e^{t\Delta}$. Then for $\xi\in\R^d$
the kernel of $e^{-t|\xi|^2} \rho_\xi e^{t\Delta} \rho_\xi^{-1}$ is given by
\[
  e^{-t|\xi|^2+\xi\cdot(x-y)} k_t(x-y) = k_t(x-2t\xi-y) \qquad (x,y\in\R^d)
\]
since $-t|\xi|^2+\xi\cdot(x-y)-\frac{|x-y|^2}{4t} = -\frac{|x-y-2t\xi|^2}{4t}$. (The above
identity is the key point in the proof; this is why we need unbounded weights
in Proposition~\ref{weighted-est}.) Therefore,
\[
  e^{-t|\xi|^2} \|\rho_\xi T(t) \rho_\xi^{-1}\|_{2\to\infty}
  \le \|e^{t\Delta}\|_{2\to\infty} = \|e^{2t\Delta}\|_{1\to\infty}^{1/2} = (8\pi t)^{-d/4}.
\]
By duality we also have $e^{-t|\xi|^2} \|\rho_\xi T(t) \rho_\xi^{-1}\|_{1\to2} \le (8\pi t)^{-d/4}$.
Using the semigroup property and~\eqref{2-2-est}, we conclude for $\eps\in(0,1]$
that
\begin{align*}
\|\rho_\xi T(t) \rho_\xi^{-1}\|_{1\to\infty}
 &\le \|\rho_\xi T(\tfrac\eps2 t) \rho_\xi^{-1}\|_{2\to\infty} \|\rho_\xi T((1-\eps)t) \rho_\xi^{-1}\|_{2\to2}
    \|\rho_\xi T(\tfrac\eps2 t) \rho_\xi^{-1}\|_{1\to2} \\
 &\le e^{t|\xi|^2} (8\pi\tfrac\eps2 t)^{-d/4} e^{-E_0(1-\eps)t} (8\pi\tfrac\eps2 t)^{-d/4}
  = (4\pi\eps t)^{-d/2} e^{t|\xi|^2-E_0(1-\eps)t}.
\end{align*}
Now the first assertion follows from Lemma~\ref{davies}, and this gives the
second assertion by the resolvent formula.
\end{proof}

\begin{proof}[Proof of Theorem~\ref{hke}]
The existence of the kernel $p_t$ follows from the Dunford-Pettis theorem, and
Theorem~\ref{res-est} implies
\[
  p_t(x,y)
  \le (4\pi\eps t)^{-d/2} e^{\eps E_0t} \exp\left( \!-E_0t-\frac{|x-y|^2}{4t} \right)
\]
for all $t>0$. Then for $t \ge \frac{d}{2E_0}$ the assertion follows by setting $\eps
:= \frac{d}{2E_0t}$.
\end{proof}

We conclude this section with an off-diagonal $L_1$-estimate for the free
resolvent.

\begin{proposition}\label{free-res}
Let $A,B\subseteq\R^d$ be measurable, and let $d(A,B)$ denote the distance between $A$
and $B$. Then
\[
  \|\ind_A(\mu-\Delta)^{-1}\ind_B\|_{1\to1} \le (1-\theta^2)^{-d/2} \rfrac1\mu \exp\bigl(-\theta\sqrt\mu\,d(A,B)\bigr)
\]
for all $\mu>0$,\, $0<\theta<1$.
\end{proposition}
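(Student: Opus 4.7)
The plan is to reduce the operator norm to a scalar integral of the resolvent kernel and then bound it by combining the subordination formula with an AM--GM estimate. Since $(\mu-\Delta)^{-1}$ has the non-negative convolution kernel $G_\mu$, the $L_1\to L_1$-norm of $\ind_A(\mu-\Delta)^{-1}\ind_B$ equals $\esssup_{y\in B}\int_A G_\mu(x-y)\,dx$; since $|x-y|\ge d(A,B)=:R$ for $x\in A$ and $y\in B$, it suffices to prove
\[
  \int_{|z|\ge R}G_\mu(z)\,dz\le (1-\theta^2)^{-d/2}\mu^{-1}\exp\bigl(-\theta\sqrt\mu\,R\bigr).
\]

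Using the subordination $G_\mu(z)=\int_0^\infty e^{-\mu t}(4\pi t)^{-d/2}e^{-|z|^2/(4t)}\,dt$ and passing to radial coordinates (substitution $v=r^2/(4t)$), the annular Gaussian integral collapses to an incomplete gamma function:
\[
  \int_{|z|\ge R}(4\pi t)^{-d/2}e^{-|z|^2/(4t)}\,dz=\Gamma(d/2,R^2/(4t))/\Gamma(d/2).
\]
Expanding $\Gamma(d/2,x)=\int_x^\infty e^{-v}v^{d/2-1}\,dv$ and interchanging the order of the $t$- and $v$-integrations (the condition $v\ge R^2/(4t)$ becomes $t\ge R^2/(4v)$, and the inner $t$-integral equals $\mu^{-1}e^{-\mu R^2/(4v)}$) yields the closed form
\[
  \int_{|z|\ge R}G_\mu(z)\,dz=\frac{1}{\mu\Gamma(d/2)}\int_0^\infty v^{d/2-1}e^{-v-\mu R^2/(4v)}\,dv.
\]

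For any $\theta\in(0,1)$, AM--GM gives $\theta^2 v+\mu R^2/(4v)\ge 2\sqrt{\theta^2 v\cdot\mu R^2/(4v)}=\theta\sqrt\mu\,R$, hence $v+\mu R^2/(4v)\ge(1-\theta^2)v+\theta\sqrt\mu\,R$. Inserting this estimate and evaluating the remaining Gamma integral,
\[
  \int_0^\infty v^{d/2-1}e^{-v-\mu R^2/(4v)}\,dv\le e^{-\theta\sqrt\mu\,R}\int_0^\infty v^{d/2-1}e^{-(1-\theta^2)v}\,dv=\Gamma(d/2)(1-\theta^2)^{-d/2}e^{-\theta\sqrt\mu\,R},
\]
which combines with the previous identity to yield exactly the stated bound. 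The step requiring thought is rewriting the spatial integral as a one-dimensional Laplace-type integral; once that identity is in place, the AM--GM splitting with weight $\theta^2$ is dictated by the target constants $(1-\theta^2)^{-d/2}$ and $\exp(-\theta\sqrt\mu\,d(A,B))$ and reproduces them exactly.
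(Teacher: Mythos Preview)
Your proof is correct and follows essentially the same approach as the paper: both reduce to bounding $\int_{|z|\ge R}G_\mu(z)\,dz$, apply the subordination formula, interchange the order of integration via Fubini, and finish with the same AM--GM splitting $\theta^2 v+\mu R^2/(4v)\ge\theta\sqrt\mu\,R$. The only cosmetic differences are that the paper passes to the $L_\infty\!\to\! L_\infty$ norm by duality (rather than directly identifying the $L_1\!\to\! L_1$ norm via column integrals of the kernel) and keeps the $d$-dimensional Gaussian integral in the variable $z$ instead of reducing to the one-dimensional integral in $v=|z|^2$ via radial coordinates.
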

\begin{proof}
Let $r:=d(A,B)$. By duality we have to show
\[
  \|\ind_B(\mu-\Delta)^{-1}\ind_A\|_{\infty\to\infty} \le (1-\theta^2)^{-d/2} \rfrac1\mu \exp\bigl(-\theta r\sqrt\mu\bigr) =: C,
\]
or equivalently, $(\mu-\Delta)^{-1}\ind_A \le C$ on $B$. Let $x\in B$. By the resolvent formula
we obtain
\[
  \renewcommand{\-}{\hspace{-0.1em}}
  (\mu-\Delta)^{-1}\ind_A(x) = \int_0^\infty \- e^{-\mu t} \- \int_{\R^d} k_t(y) \ind_A(x-y)\,dy\,dt
               \le \int_0^\infty \- e^{-\mu t} \- \int_{|y|\ge r} k_t(y)\,dy\,dt,
\]
where $k_t(y) = (4\pi t)^{-d/2} \exp\bigl(-\frac{|y|^2}{4t}\bigr)$. We substitute $y = (4t)^{1/2}
z$ and note that $|y|\ge r$ if and only if $t \ge \bigl(\frac{r}{2|z|}\bigr)^2$; then by
Fubini's theorem we infer that
\begin{align*}
(\mu-\Delta)^{-1}\ind_A(x)
 &\le \pi^{-d/2} \int_{\R^d}
    \int_{\left(\frac{r}{2|z|}\right)^2}^\infty e^{-\mu t}\,dt\, e^{-|z|^2}\,dz \\
 &= \pi^{-d/2} \frac1\mu \int_{\R^d} \exp\!\left( -\frac{\mu r^2}{4|z|^2} - |z|^2 \right)dz.
\end{align*}
Note that $\theta r\sqrt\mu \le \frac{\mu r^2}{4|z|^2} + \theta^2|z|^2$ and hence $\exp\bigl( -\frac{\mu
r^2}{4|z|^2} - |z|^2 \bigr) \le e^{-\theta r\sqrt\mu} e^{-(1-\theta^2)|z|^2}$ for all $z\in\R^d$. We conclude
that
\[
  (\mu-\Delta)^{-1}\ind_A(x) \le \rfrac1\mu e^{-\theta r\sqrt\mu} \,\pi^{-d/2}\! \int_{\R^d} e^{-(1-\theta^2)|z|^2} dz
               = \rfrac1\mu e^{-\theta r\sqrt\mu} (1-\theta^2)^{-d/2},
\]
which proves the assertion.
\end{proof}

\begin{remark}
For $\mu > \bigl(\rfrac dr\bigr)^2$ (where $r=d(A,B)$), optimizing the estimate of
Proposition~\ref{free-res} with respect to $\theta$ leads to the choice $\theta =
\bigl(1-\frac{d}{r\sqrt\mu}\bigr)^{1/2}$. For $\mu > \bigl(\frac{d}{2r}\bigr)^2$, the choice $\theta =
1-\smash{\frac{d}{2r\sqrt\mu}}$ yields
\[
  \|\ind_A(\mu-\Delta)^{-1}\ind_B\|_{1\to1} \le \bigl(\tfrac{2e}{d}+r\sqrt\mu\bigr)^{d/2} \rfrac1\mu e^{-r\sqrt\mu}.
\]
\end{remark}

\section{Proof of Theorem~\ref{l1-est}}\label{sec_l1-est}

Throughout this section we assume the setting of Section~\ref{sec_sgs}, i.e.,
$\Omega\subseteq\R^d$ is measurable, $T$ a selfadjoint positivity preserving $C_0$-semi\-group
on $L_2(\Omega)$ dominated by the free heat semigroup, with generator $-H$, and $\tau$
the closed symmetric form associated with $H$. We denote
\[
  E_0(H) := \inf\sigma(H).
\]
Recall that, for an open set $U\subseteq\R^d$,\, $H_U$ is the selfadjoint operator in
$L_2(\Omega\cap U)$ associated with the form $\tau$ restricted to $D(\tau)\cap H_0^1(U)$.

For $A\subseteq\R^d$ we denote by $U_\eps(A) = \bigcup_{x\in A} B(x,\eps)$ the $\eps$-neighborhood of
$A$. If $A$ is measurable, then we write $|A|$ for the Lebesgue measure of $A$.
For $r>0$ and $E_0(H) < t < \inf\sigma_\ess(H)$ we define the sets
\begin{equation}\label{FrGr}
\begin{split}
  F_r(t) &:= \set{x\in\R^d}{E_0(H_{B(x,r)}) < t},
  \\[0.5ex plus 0.2ex minus 0.2ex]
  G_r(t) &:= \R^d \setminus \widebar{U_r(F_r(t))}.
\end{split}
\end{equation}
For the proof of Theorem~\ref{l1-est} the following two facts will be crucial.
On the one hand, the set $F_r(t)$ is ``small'' in the sense that the Lebesgue
measure of $U_{3r}(F_r(t))$ is not too large, as is expressed in the next lemma.
On the other hand, the set $G_r(t)$ is ``spectrally small'' in the sense that
the ground state energy of $H_{G_r(t)}$ is not much smaller than $t$; see
Lemma~\ref{E0-est} below.

\begin{lemma}\label{UsFr}
Let $r>0$ and $E_0(H) < t < \inf\sigma_\ess(H)$. Then
\[
  |U_s(F_r(t))| \le \omega_d (2r+s)^d N_t(H) \qquad (s>0),
\]
where $\omega_d := |B(0,1)|$.
\end{lemma}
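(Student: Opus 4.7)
The target bound $|U_s(F_r(t))| \le |B(0,2r+s)|\,N_t(H)$ reads like a Vitali covering inequality in which the number of balls is controlled by $N_t(H)$. The plan is therefore to select, inside $F_r(t)$, a maximal family of centres whose $r$-balls are pairwise disjoint, and to use those balls both (i)~to cover $U_s(F_r(t))$ by balls of radius $2r+s$ and (ii)~to produce an orthogonal family of approximate ground states $u_i\in D(\tau)$ whose size is then estimated via the min--max principle.

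For the covering, Zorn's lemma provides a maximal (a priori possibly infinite) family $(x_i)_{i\in I}$ in $F_r(t)$ such that the open balls $B(x_i,r)$ are pairwise disjoint; set $N:=\#I$. For any $y\in F_r(t)$, maximality forces $B(y,r)\cap B(x_i,r)\ne\varnothing$ for some $i$, whence $|y-x_i|<2r$. Thus $F_r(t)\subseteq\bigcup_{i}B(x_i,2r)$, and enlarging by $s$ gives $U_s(F_r(t))\subseteq\bigcup_{i}B(x_i,2r+s)$, so $|U_s(F_r(t))|\le N\omega_d(2r+s)^d$.

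It remains to prove $N\le N_t(H)$. For each $i$, since $E_0(H_{B(x_i,r)})<t$, the variational principle yields a real-valued $v_i\in D(\tau)\cap H_0^1(B(x_i,r))$ with $\tau(v_i)<t\|v_i\|_2^2$. Set $u_i:=|v_i|$; as $D(\tau)$ is a lattice ideal in $H^1(\R^d)$, $u_i\in D(\tau)\cap H_0^1(B(x_i,r))$, and the first Beurling--Deny condition gives $\tau(u_i)\le\tau(v_i)<t\|u_i\|_2^2$. Since the balls $B(x_i,r)$ are pairwise disjoint, the non-negative functions $u_i$ satisfy $u_i\wedge u_j=0$ for $i\ne j$; Lemma~\ref{sigma-pos} then yields $\sigma(u_i,u_j)=0$, and $\<\nabla u_i,\nabla u_j\>=0$ because each $\nabla u_k$ vanishes a.e.\ on $[u_k=0]$ by Stampacchia's theorem. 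Via the decomposition~\eqref{sigma-def} this gives $\tau(u_i,u_j)=0=\<u_i,u_j\>$ for $i\ne j$, hence $\tau(u)<t\|u\|_2^2$ for every nonzero $u$ in the $N$-dimensional span of $\{u_i\}_{i\in I}$. Because $t<\inf\sigma_\ess(H)$, the min--max principle forces $N_t(H)\ge N$ (and in particular $N<\infty$).

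The only genuinely delicate point is the passage from geometric disjointness of the balls $B(x_i,r)$ to $\tau$-orthogonality of the $u_i$'s: the nonlocal-looking form $\sigma$ could a priori couple disjointly supported functions, and it is precisely Lemma~\ref{sigma-pos} combined with the decomposition~\eqref{sigma-def} that rules this out. Everything else is elementary measure theory and min--max.
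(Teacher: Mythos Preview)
Your proof is correct and follows the same strategy as the paper's: select a maximal $r$-separated subset of $F_r(t)$, use it to cover $U_s(F_r(t))$ by balls of radius $2r+s$, and bound the number of centres by $N_t(H)$ via min--max. The paper's proof is a two-line sketch that simply asserts the min--max step; you have spelled out the one genuinely nontrivial detail the paper suppresses, namely that disjointly supported test functions are $\tau$-orthogonal because $\sigma$ is local in the sense of Lemma~\ref{sigma-pos}, so your write-up is in fact more complete than the paper's own.
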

\begin{proof}
Let $M \subseteq F_r(t)$ be a maximal subset with the property that the balls
$B(x,r)$,\, $x \in M$ are pairwise disjoint. Then by the min-max principle and
the definition of $F_r(t)$ one sees that $M$ has at most $N_t(H)$ elements.
Moreover, $F_r(t) \subseteq \bigcup_{x\in M} B(x,2r)$ by the maximality of $M$. Therefore,
\[
  |U_s(F_r(t))| \le \sum_{x\in M} |B(x,2r+s)| \le N_t(H) \cdot \omega_d(2r+s)^d. \qedhere
\]
\end{proof}

\begin{lemma}\label{E0-est}
Let $E_{0,d}$ denote the ground state energy of the Dirichlet Laplacian on
$B(0,1)$. Then $E_{0,d} \le \frac12(d+1)(d+2) \le \frac34(d+1)^2$, and
\[
  E_0(H_{G_r(t)}) \ge t - E_{0,d}/r^2 \qquad \bigl(r>0,\ E_0(H) < t < \inf\sigma_\ess(H)\bigr).
\]
\end{lemma}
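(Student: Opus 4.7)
The proof splits into the two claims. For the upper bound on $E_{0,d}$ I would test the Rayleigh quotient of the Dirichlet Laplacian on $B(0,1)$ with $\psi(x):=1-|x|\in H_0^1(B(0,1))$. Since $|\nabla\psi|=1$ almost everywhere, $\|\nabla\psi\|_2^2=\omega_d$, and polar coordinates together with the Beta integral $\int_0^1(1-r)^2r^{d-1}\,dr=\frac{2}{d(d+1)(d+2)}$ give $\|\psi\|_2^2=\frac{2\omega_d}{(d+1)(d+2)}$. Hence $E_{0,d}\le\frac{(d+1)(d+2)}{2}$, and the comparison with $\frac{3(d+1)^2}{4}$ is the elementary inequality $2(d+2)\le3(d+1)$ for $d\ge1$.

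For the spectral lower bound I would use a \emph{continuous IMS-type localization}. Fix $\xi\in C_\c^\infty(B(0,r))$ with $\|\xi\|_2=1$ and set $\xi_x:=\xi(\cdot-x)$. The plan is to establish, for $u\ge0$ in the form domain of $H_{G_r(t)}$, the identity
\[
  \int_{\R^d}\tau(\xi_x u,\xi_x u)\,dx = \tau(u,u)+\|\nabla\xi\|_2^2\,\|u\|_2^2.
\]
Decomposing $\tau$ via \eqref{sigma-def} and expanding $|\nabla(\xi_x u)|^2$, the cross term integrates to zero in $x$ because $\int\xi\nabla\xi=\tfrac12\int\nabla(\xi^2)=0$, and translation invariance of Lebesgue measure converts the two diagonal pieces into $\|\nabla u\|_2^2+\|\nabla\xi\|_2^2\|u\|_2^2$. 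For the $\sigma$-part I would invoke the measure representation extracted in the proof of Lemma~\ref{sigma}: for $u\ge0$ there is a finite positive Borel measure $\mu_u$ on $\R^d$ with $\sigma(\varphi u,\psi u)=\int\varphi\psi\,d\mu_u$ for all $\varphi,\psi\in W_{\!\infty}^1(\R^d)$. Taking $\varphi=\psi=1$ gives $\mu_u(\R^d)=\sigma(u,u)$, and Fubini then yields $\int_{\R^d}\sigma(\xi_x u,\xi_x u)\,dx=\|\xi\|_2^2\,\mu_u(\R^d)=\sigma(u,u)$.

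The geometry of the sets $F_r(t)$ and $G_r(t)$ enters through a pointwise inequality in $x$. By Lemma~\ref{multiplicator}, $\xi_x u\in D(\tau)$, and since $\xi_x\in C_\c^\infty(B(x,r))$ also $\xi_x u\in H_0^1(B(x,r))$, so $\xi_x u$ lies in the form domain of $H_{B(x,r)}$. For $x\notin F_r(t)$ the definition of $F_r(t)$ gives $E_0(H_{B(x,r)})\ge t$, hence $\tau(\xi_x u,\xi_x u)\ge t\|\xi_x u\|_2^2$. For $x\in F_r(t)$ one has $B(x,r)\subseteq U_r(F_r(t))$, and since $u\in H_0^1(G_r(t))$ vanishes a.e.\ on $\R^d\setminus G_r(t)=\overline{U_r(F_r(t))}$, we obtain $\xi_x u=0$ so the same bound holds trivially. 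Integrating in $x$, noting $\int\|\xi_x u\|_2^2\,dx=\|u\|_2^2$, and comparing with the identity above produces $\tau(u,u)\ge(t-\|\nabla\xi\|_2^2)\|u\|_2^2$. Since $\xi$ is arbitrary in $C_\c^\infty(B(0,r))$ with $\|\xi\|_2=1$, the infimum of $\|\nabla\xi\|_2^2$ equals the Dirichlet ground state energy on $B(0,r)$, namely $E_{0,d}/r^2$ by scaling; this yields the claim.

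The main obstacle is the careful justification of the $\sigma$-piece: one has to extract and use the measure $\mu_u$ implicit in the proof of Lemma~\ref{sigma} (originally built only for $u=v\ge0$), confirm its total mass equals $\sigma(u,u)$ by admitting the constants $\varphi=\psi=1\in W_{\!\infty}^1(\R^d)$ in the representation, and legitimize the Fubini interchange via the continuity of $x\mapsto\xi_x u$ in $D(\tau)$ provided by Lemma~\ref{multiplicator}. The reduction to $u\ge0$ is a standard consequence of the positivity preserving property of $T$, which ensures that the ground state energy of $H_{G_r(t)}$ is attained (or approached) on non-negative functions.
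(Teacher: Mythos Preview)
Your argument is correct and follows the same continuous IMS localization strategy as the paper. The paper streamlines the $\sigma$-piece: rather than invoking the measure $\mu_u$ from the \emph{proof} of Lemma~\ref{sigma}, it applies the \emph{statement} of Lemma~\ref{sigma} in the form $\sigma(\xi_x u,\xi_x u)=\sigma(\xi_x^2 u,u)$, which combines with the gradient computation into the closed-form IMS identity $\tau(\xi_x u)=\tau(\xi_x^2 u,u)+\int u^2|\nabla\xi_x|^2$. Integrating in $x$ then only needs $\int_x \xi_x^2 u\,dx=u$ as a $D(\tau)$-valued integral, justified via the continuity in Lemma~\ref{multiplicator}. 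This sidesteps both the reduction to $u\ge0$ and the identification $\mu_u(\R^d)=\sigma(u,u)$. The paper also takes $\xi=\psi_r$ to be the scaled ground state itself---it lies in $W_{\!\infty}^1$, which is all Lemmas~\ref{multiplicator} and~\ref{sigma} require---so your final infimum over $C_\c^\infty$ cutoffs is unnecessary. Your route is sound; the paper's is just a bit tidier.
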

\begin{proof}
For $\psi \in W_{2,0}^1\bigl(B(0,1)\bigr)$ defined by $\psi(x) = 1-|x|$ one easily computes
$\|\nabla\psi\|_2^2/\|\psi\|_2^2 = \frac12(d+1)(d+2)$, thus proving the first assertion. Let now $\psi$
denote the normalized ground state of the Dirichlet Laplacian on $B(0,1)$. For
$r>0$ let $\psi_r := r^{-d/2} \psi(\rfrac\cdot r)$; note that $\|\psi_r\|_2 = 1$ and $\psi_r \in
W_{\!\infty}^1(\R^d)$.

To prove the second assertion, we need to show that
\begin{equation}\label{tau-u}
  \tau(u) \ge \bigl(t - E_{0,d}/r^2\bigr) \|u\|_2^2
\end{equation}
for all $u \in D(\tau)\cap H_0^1(G_r(t))$, without loss of generality $u$ real-valued. We
will use $\bigl(\psi_r(\cdot-x)^2\bigr)\rule[-0.6ex]{0pt}{0pt}_{x\in\R^d}$ as a continuous partition
of the identity. By Lemma~\ref{multiplicator} we have $\psi_r(\cdot-x)u \in D(\tau)$ for all
$x\in\R^d$. Using~\eqref{sigma-def} and Lemma~\ref{sigma} we obtain
\begin{align*}
\tau\bigl(\psi_r(\cdot-x)u\bigr) &= \bigln \psi_r(\cdot-x)\nabla u + u\nabla\psi_r(\cdot-x) \bigrn_2^2 + \sigma\bigl(\psi_r(\cdot-x)u\bigr) \\
            &= \int \Bigl( \nabla\bigl(\psi_r(\cdot-x)^2u\bigr) \cdot \nabla u  + u^2|\nabla\psi_r(\cdot-x)|^2 \Bigr)
               + \sigma\bigl(\psi_r(\cdot-x)^2u,u\bigr) \\
            &= \tau\bigl(\psi_r(\cdot-x)^2u,u\bigr) + \int u^2|\nabla\psi_r(\cdot-x)|^2.
\end{align*}

Note that $\int \psi_r(y-x)^2\,dx = \|\psi_r\|_2^2 = 1$ and
\[
  \int |\nabla\psi_r(y-x)|^2\,dx = \|\nabla\psi_r\|_2^2 = \|\nabla\psi\|_2^2/r^2 = E_{0,d}/r^2
\]
for all $y\in\R^d$. Taking into account Lemma~\ref{multiplicator} (with $\xi = \psi_r^2$)
we thus obtain $\int \tau\bigl(\psi_r(\cdot-x)^2u,u\bigr)\,dx = \tau(u,u)$ and hence
\[
  \int \tau\bigl(\psi_r(\cdot-x)u\bigr)\,dx = \tau(u) + \|u\|_2^2\cdot E_{0,d}/r^2.
\]
To conclude the proof of~\eqref{tau-u}, we show that the left hand side of this
identity is greater or equal $t\|u\|_2^2$: note that $\psi_r(\cdot-x)u \in H_0^1(B(x,r))$. For
$x \in \R^d\setminus F_r(t)$ we have $\tau\bigl(\psi_r(\cdot-x)u\bigr) \ge t\|\psi_r(\cdot-x)u\|_2^2$ by the definition of
$F_r(t)$; for $x \in F_r(t)$ we have $\psi_r(\cdot-x)u = 0$ since $u \in H_0^1(G_r(t))$.
Therefore,
\[
  \int \tau\bigl(\psi_r(\cdot-x)u\bigr)\,dx \ge t \int \|\psi_r(\cdot-x)u\|_2^2\,dx = t\|u\|_2^2\,. \qedhere
\]
\end{proof}

\begin{remark}
It is known that $E_{0,d}$ behaves like $\frac14 d^2$ for large $d$. For $d=3$,
however, the estimate $E_{0,d} \le \frac12(d+1)(d+2) = 10$ from Lemma~\ref{E0-est} is
quite sharp since $E_{0,3} = \pi^2 > 9.86$.
\end{remark}

\begin{lemma}\label{rho-lemma}
There exists $0 \le \rho \in C^2(\R^d)$ such that $\spt\rho \subseteq B(0,1)$,\, $\int\rho = 1$ and
\begin{equation}\label{rho-est}
  \|\nabla\rho\|_1 \le d+1, \qquad \|\Delta\rho\|_1 \le 2(d+1)^2.
\end{equation}
\end{lemma}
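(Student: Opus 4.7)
The plan is to exhibit $\rho$ explicitly as a radial $C^2$ bump and to verify both bounds by direct computation.

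Take $\rho(x) := c\,g(|x|)$ with $g \in C^2([0,\infty))$ non-negative, vanishing on $[1,\infty)$, with $g'(0) = 0$ (so that $\rho$ is $C^2$ at the origin) and $g(1) = g'(1) = g''(1) = 0$ (so that $\rho$ is $C^2$ across $|x| = 1$); the constant $c > 0$ is fixed by $\int\rho = 1$. For radial $\rho$ one has $|\nabla\rho(x)| = c\,|g'(|x|)|$ and $\Delta\rho(x) = c\,g''(|x|) + c(d-1)g'(|x|)/|x|$, so both target quantities reduce to one-dimensional integrals against $d\omega_d\,r^{d-1}\,dr$ (with $\omega_d = |B(0,1)|$), which are computable in closed form once $g$ is piecewise polynomial.

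A concrete family of candidates is the plateau profile: $g(r) = 1$ on $[0,a]$ and $g(r) = q((r-a)/(1-a))$ on $[a,1]$, for a fixed $C^2$ transition $q\colon[0,1]\to[0,1]$ satisfying $q(0) = 1$, $q(1) = 0$, and $q'(0) = q''(0) = q'(1) = q''(1) = 0$ (for instance, the quintic Hermite $q(u) = 1 - 10u^3 + 15u^4 - 6u^5$). Via the substitution $r = a + (1-a)u$ each integral becomes a polynomial expression in $a$, which can be evaluated and then optimized in $a \in (0,1)$ (possibly with $a$ depending on $d$) to meet both bounds.

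The main obstacle is that the two inequalities pull in opposite directions: the gradient bound is already nearly sharp, since the non-$C^2$ tent $(1-|x|)_+$ achieves equality in $\|\nabla u\|_1 = (d+1)\|u\|_1$, forcing the transition annulus to be thin ($a$ close to $1$); but this inflates $\|\Delta\rho\|_1$ through the $(1-a)^{-2}$ scaling of $g''$. The delicate step is to tune $a$ as a suitable function of $d$, or, if needed, to replace the quintic Hermite by a higher-order polynomial whose $\|q''\|_{L^1}$ lies closer to the lower bound $2$ forced by $q'(0) = q'(1) = 0$ together with $\int_0^1 q'(u)\,du = -1$. The final verification is then a routine beta-function computation.
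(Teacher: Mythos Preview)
Your plan is a reasonable strategy in principle, but it is not the paper's, and the ``routine beta-function computation'' you defer is more delicate than you suggest. A quick test in $d=2$ shows the tension concretely: the gradient bound $\|\nabla\rho\|_1 \le 3$ forces $a \gtrsim 0.27$, while the crude triangle-inequality estimate $\|\Delta\rho\|_1 \le c\sigma_1\int(|g''| + |g'|/r)r\,dr$ with the quintic ($\int_0^1|q''| = 15/4$) then gives a value around $20 > 18$ at that endpoint. So you would need either a sharper computation of $\int|\Delta\rho|$ exploiting the sign cancellation between $g''$ and $(d-1)g'/r$ on half the annulus, or a replacement of $q$ by a profile with $\int|q''|$ genuinely closer to~$2$, and in either case a $d$-dependent choice of $a$. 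This can presumably be pushed through, but it is not a one-line verification.

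The paper sidesteps the balancing problem altogether with the following observation: mollification never increases $L_1$-norms of derivatives, even when those derivatives are merely measures. So one may start from the \emph{non}-$C^2$ parabolic bump $\rho_0(x) = \tfrac{d(d+2)}{2\sigma_{d-1}}(1-|x|^2)\ind_{B(0,1)}$, for which a two-line computation gives the \emph{strict} inequalities
\[
  \|\nabla\rho_0\|_1 = \frac{d(d+2)}{d+1} < d+1,
  \qquad
  \|\Delta\rho_0\|_{\mathrm{TV}} = 2d(d+2) < 2(d+1)^2
\]
(here $\Delta\rho_0 = \tfrac{d(d+2)}{\sigma_{d-1}}\bigl(-d\,\ind_{B(0,1)} + \delta_{\partial B(0,1)}\bigr)$ as a measure). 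Convolving with a mollifier $\eta_\eps$ and rescaling from $B(0,1+\eps)$ back to $B(0,1)$ multiplies these norms by at most $(1+\eps)$ and $(1+\eps)^2$ respectively; since both inequalities were strict, a small enough $\eps$ does the job. The point you are missing is precisely this: there is no need to build a $C^2$ function directly and fight the gradient/Laplacian trade-off --- pick a convenient rough model that beats both bounds strictly, then smooth it.
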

\begin{proof}
Let $\rho_0 \in W_1^1(\R^d)$,\, $\rho_0(x) := \frac{d(d+2)}{2\sigma_{d-1}} (1-|x|^2)
\ind_{B(0,1)}(x)$, where $\sigma_{d-1}$ denotes the surface measure of the unit sphere
$\partial B(0,1)$. Then one easily computes
\[
  \int\rho_0 = 1, \qquad \|\nabla\rho_0\|_1 = \frac{d(d+2)}{d+1} < d+1
\]\smallbds
and
\[
  \Delta\rho_0 = \frac{d(d+2)}{\sigma_{d-1}}\bigl(-d\+\ind_{B(0,1)} + \delta_{\partial B(0,1)}\bigr)
\]
in the distributional sense, so $\Delta\rho_0$ is a measure with $\|\Delta\rho_0\| = 2d(d+2) <
2(d+1)^2$. Using a suitable mollifier and scaling, one obtains $\rho$ as asserted.
\end{proof}

\begin{proof}[Proof of Theorem~\ref{l1-est}]
(i) Let $r > \bigl(\frac{E_{0,d}}{t-\lambda}\bigr)^{1/2}$, and let $F_r := F_r(t)$,\, $G_r := G_r(t)$ be
as in~\eqref{FrGr}. Then $E_0(H_{G_r}) > \lambda$ by Lemma~\ref{E0-est}. We define $\xi\in
C^2(\R^d)$ satisfying
\[
  \spt \xi \subseteq G_r, \qquad \spt (\ind_{\R^d}-\xi) \subseteq U_{2r}(F_r)
\]
as follows: let $\rho_r := r^{-d} \rho(\rfrac\cdot r)$, where $\rho$ is as in
Lemma~\ref{rho-lemma}. Then
\[
  \xi := \ind_{\R^d} - \rho_{r/2} * \ind_{U_{3r/2}(F_r)}
     = \tfrac12 \ind_{\R^d} + \rho_{r/2} * (\tfrac12 \ind_{\R^d}-\ind_{U_{3r/2}(F_r)})
\]
has the above properties, and
\begin{equation}\label{xi-est}
\|\nabla\xi\|_\infty \le \tfrac12 \|\nabla\rho_{r/2}\|_1 = \rfrac1r \|\nabla\rho\|_1, \qquad
\|\Delta\xi\|_\infty \le \tfrac12 \|\Delta\rho_{r/2}\|_1 = \tfrac{2}{r^2} \|\Delta\rho\|_1.
\end{equation}
By Lemma~\ref{HG-action} we obtain $\xi\varphi \in D(H_{G_r})$ and
\[
  f_r := (H_{G_r}-\lambda)(\xi\varphi) = -2\nabla\xi\cdot\nabla\varphi - (\Delta\xi)\varphi, \qquad
  \spt f_r \subseteq \spt \nabla\xi \subseteq U_{2r}(F_r).
\]
Then $\xi\varphi = (H_{G_r}-\lambda)^{-1}f_r = (H_{G_r}-\lambda)^{-1}\ind_{U_{2r}(F_r)}f_r$. Since $\xi=1$ on $\Omega\setminus
U_{3r}(F_r)$, we can now estimate
\begin{equation}\label{phi-est}
\begin{split}
\|\varphi\|_1 &= \|\ind_{U_{3r}(F_r)}\varphi\|_1 + \|\ind_{\Omega\setminus U_{3r}(F_r)}\xi\varphi\|_1 \\
     &\le \|\ind_{U_{3r}(F_r)}\varphi\|_1
        + \| \ind_{\Omega\setminus U_{3r}(F_r)} (H_{G_r}-\lambda)^{-1} \ind_{U_{2r}(F_r)} \|_{1\to1} \|f_r\|_1.
\end{split}
\end{equation}
The remainder of the proof consists of estimating the terms in this pivotal
inequality.

Lemma~\ref{UsFr} implies
\begin{equation}\label{U3r}
  \|\ind_{U_{3r}(F_r)}\varphi\|_1 \le |U_{3r}(F_r)|^{1/2} \|\varphi\|_2 \le \bigl(\omega_d(5r)^dN_t(H)\bigr)^{1/2} \|\varphi\|_2
\end{equation}\smallbds
and
\begingroup\avoidbreak
\begin{align}
\|f_r\|_1 &\le |U_{2r}(F_r)|^{1/2} \|f_r\|_2 \le \bigl(\omega_d(4r)^dN_t(H)\bigr)^{1/2} \|f_r\|_2\,, \label{fr1} \\
\|f_r\|_2 &\le 2\|\nabla\xi\|_\infty\|\nabla\varphi\|_2 + \|\Delta\xi\|_\infty\|\varphi\|_2 \le \frac2r \+ \|\nabla\rho\|_1 \sqrt\lambda\,\|\varphi\|_2 + \frac{2}{r^2} \+
\|\Delta\rho\|_1\|\varphi\|_2\,, \label{fr2}
\end{align}
\endgroup
where in~\eqref{fr2} we used~\eqref{xi-est} and $\|\nabla\varphi\|_2^2 = \lambda\|\varphi\|_2^2$.

(ii) Next we estimate $\| \ind_{\Omega\setminus U_{3r}(F_r)} (H_{G_r}-\lambda)^{-1} \ind_{U_{2r}(F_r)} \|_{1\to1}$.
Let $\delta,\theta\in(0,1)$ and
\[
  \eps := \delta\+\frac{t-\lambda}{t}, \qquad \mu := (1-\eps)E_0(H_{G_r}) - \lambda.
\]
Then $(H_{G_r}-\lambda)^{-1} \le \eps^{-d/2} (\mu-\Delta)^{-1}$ by Theorem~\ref{res-est}, and hence
Proposition~\ref{free-res} implies
\begin{equation}\label{we1}
  \| \ind_{\Omega\setminus U_{3r}(F_r)} (H_{G_r}-\lambda)^{-1} \ind_{U_{2r}(F_r)} \|_{1\to1}
  \le \eps^{-d/2} (1-\theta^2)^{-d/2} \rfrac1\mu e^{-\theta r\sqrt\mu}.
\end{equation}

By Lemma~\ref{E0-est} and the definition of $\eps$ we have
\[
  \mu \ge (1-\eps)(t-E_{0,d}/r^2) - \lambda \ge t - \eps t - E_{0,d}/r^2 - \lambda
    = (1-\delta)(t-\lambda) - E_{0,d}/r^2.
\]
We now choose $r$ such that $r^2 = \frac{c^2+E_{0,d}}{(1-\delta)(t-\lambda)}$, with $c \ge
d+1$ to be determined later. Then
\begin{equation}\label{r-est}
  r^2 \le \frac{7/4}{(1-\delta)(t-\lambda)}\,c^2
\end{equation}
since $E_{0,d} \le \frac34(d+1)^2 \le \frac34 c^2$ by Lemma~\ref{E0-est}, and
\[
  \mu r^2 \ge (1-\delta)(t-\lambda)r^2 - E_{0,d} = c^2.
\]
By~\eqref{we1} we thus obtain
\begin{equation}\label{we2}
  \| \ind_{\Omega\setminus U_{3r}(F_r)} (H_{G_r}-\lambda)^{-1} \ind_{U_{2r}(F_r)} \|_{1\to1}
  \le \eps^{-d/2} (1-\theta^2)^{-d/2} \+ \smash[t]{\frac{r^2}{c^2}} \+ e^{-\theta c}.
\end{equation}

(iii) In this step we incorporate an estimate for $\|f_r\|_1$ into~\eqref{we2}.
By~\eqref{rho-est} we have $\|\nabla\rho\|_1 \le c$ and $\|\Delta\rho\|_1 \le 2c^2$. Thus,
using~\eqref{fr2}, \eqref{r-est} and $\lambda<t$ we obtain
\begin{align*}
\frac{r^2}{2} \|f_r\|_2
 &\le \|\nabla\rho\|_1 r\sqrt \lambda\,\|\varphi\|_2 + \|\Delta\rho\|_1\|\varphi\|_2 \\
 &\le c^2 \sqrt{\frac{7/4}{1-\delta}} \,\cdot \sqrt{\frac{\lambda}{t-\lambda}} \; \|\varphi\|_2 + 2c^2 \|\varphi\|_2
  \le c^2 C_\delta \sqrt{\frac{t}{t-\lambda}} \;\|\varphi\|_2\,,
\end{align*}
with $C_\delta = \sqrt{2/(1-\delta)}\++2$. Recalling $\eps = \delta\frac{t-\lambda}{t}$, we infer
by~\eqref{we2} that
\begin{equation}\label{we3}
\begin{split}
\| \ind_{\Omega\setminus U_{3r}(F_r)} (H_{G_r}-\lambda)^{-1} \ind_{U_{2r}(F_r)} \|_{1\to1} \|f_r\|_2
  \le \eps^{-d/2} & (1-\theta^2)^{-d/2} e^{-\theta c} \frac{r^2}{c^2} \|f_r\|_2 \\
  \le \delta^{-d/2} \bigl(\tfrac{t}{t-\lambda}\bigr)^{(d+1)/2} & (1-\theta^2)^{-d/2} e^{-\theta c} \cdot 2 C_\delta \|\varphi\|_2\,.
\end{split}
\end{equation}

Now we set $K_{\delta,\theta} := \frac54\delta(1-\theta^2)$ and choose
\[
  c := \frac{d+1}{2\theta} \ln\left( \frac{1}{K_{\delta,\theta}} \cdot \frac{t}{t-\lambda} \right).
\]
Then
\[
  \delta^{-d/2} \bigl(\tfrac{t}{t-\lambda}\bigr)^{(d+1)/2} (1-\theta^2)^{-d/2} e^{-\theta c}
  = \bigl(\tfrac54\bigr)^{d/2} K_{\delta,\theta}^{1/2},
\]
so by~\eqref{fr1} and~\eqref{we3} we obtain
\begin{equation}\label{we4}
\begin{split}
\MoveEqLeft[8] \| \ind_{\Omega\setminus U_{3r}(F_r)} (H_{G_r}-\lambda)^{-1} \ind_{U_{2r}(F_r)} \|_{1\to1} \|f_r\|_1 \\
 &\le \bigl(\omega_d(5r)^dN_t(H)\bigr)^{1/2} \cdot K_{\delta,\theta}^{1/2} \cdot 2C_\delta \|\varphi\|_2\,.
\end{split}
\end{equation}

(iv) We set $\theta:=\frac12$ and $\delta:=\frac{16}{45}$, so that $K_{\delta,\theta} = \frac13$ and
hence
\begin{equation}\label{c}
  c = (d+1) \ln\tfrac{3t}{t-\lambda} \ge d+1
\end{equation}
as required above. Moreover, one easily verifies that $\smash{K_{\delta,\theta}^{1/2}} \cdot
2C_\delta \le \frac92$. By~\eqref{phi-est}, \eqref{U3r} and~\eqref{we4} we conclude that
\begin{equation}\label{almost}
  \|\varphi\|_1^2 \le \Bigl( \tfrac{11}{2} \bigl(\omega_d(5r)^dN_t(H)\bigr)^{1/2} \|\varphi\|_2 \Bigr)^2
        = \bigl( \tfrac{11}{2} \bigr)^{\mkern-2mu 2} \omega_d (5r)^d N_t(H) \|\varphi\|_2^2\,.
\end{equation}

Stirling's formula yields
\[
  \omega_d = \frac{\pi^{d/2}}{\Gamma(\frac d2+1)}
     \le \frac{\pi^{d/2}}{\sqrt{2\pi d/2}\,\bigl(\frac{d}{2e}\bigr)^{d/2}}
     = (\pi d)^{-1/2} (2\pi e)^{d/2} d^{-d/2},
\]
so by~\eqref{r-est} we obtain
\[
  \omega_d (5r)^d
  \le (\pi d)^{-1/2} \bigl(2\pi e \cdot \tfrac{7/4}{1-\delta}\bigr)^{d/2} d^{-d/2} \cdot 5^d c^d(t-\lambda)^{-d/2}.
\]
Using $2\pi e \cdot \tfrac{7/4}{1-\delta} \le 7^2$,\, $(d+1)^d \le 2d^{d+1/2}$ and~\eqref{c} we
finally derive
\[
  \omega_d (5r)^d \le \pi^{-1/2} (7\cdot5)^d \cdot 2d^{d/2} \bigl(\ln\tfrac{3t}{t-\lambda}\bigr)^d (t-\lambda)^{-d/2}.
\]
Together with~\eqref{almost} this proves the assertion since
$\bigl( \frac{11}{2} \bigr)^{\mkern-2mu 2} \+ \pi^{-1/2}\cdot2 \le 35$.
\end{proof}


\begin{thebibliography}{KLVW13}

\makeatletter
\itemsep3\p@ \@plus1.5\p@ \@minus\p@
\makeatother

\bibitem[ArWa03]{arwa03} {\sc W.\;Arendt and M.\;Warma},
Dirichlet and Neumann boundary conditions: What is in between?
{\it J. Evol. Equ. \bf 3} (2003), no\.1, 119--135.

\bibitem[BHV13]{bhv13} {\sc M.\;van den Berg, R.\;Hempel and J.\;Voigt},
$L_1$-estimates for eigenfunctions of the Dirichlet Laplacian,
to appear in {\it J. Spectr. Theory}.

\bibitem[BeDa89]{beda89} {\sc M.\;van den Berg and E.\,B.\;Davies},
Heat flow out of regions in~$\R^m$,
{\it Math. Z. \bf 202} (1989), no\.4, 463--482.

\bibitem[CoSi08]{cosi08} {\sc T.\;Coulhon and A.\;Sikora},
Gaussian heat kernel upper bounds via the Phragm\'en-Lindel\"of theorem,
{\it Proc. Lond. Math. Soc. \bf 96} (2008), no\.2, 507--544.

\bibitem[Dav95]{dav95} {\sc E.\,B.\;Davies},
Uniformly elliptic operators with measurable coefficients,
{\it J. Funct. Anal. \bf 132} (1995), no\.1, 141--169.

\bibitem[KLVW13]{klvw13} {\sc M.\;Keller, D.\;Lenz, H.\;Vogt and
R.\;Wojciechowski},
Note on basic features of large time behaviour of heat kernels,
to appear in {\it J. Reine angew. Math.},
DOI: \texttt{10.1515/crelle-2013-0070}\,.

\bibitem[LiYa83]{liya83} {\sc P.\;Li and S.\,T.\;Yau},
On the Schr\"odinger equation and the eigenvalue problem,
{\it Comm. Math. Phys. \bf 88} (1983), no\.3, 309--318.

\bibitem[MVV05]{mvv05} {\sc A.\;Manavi, H.\;Vogt and J.\;Voigt},
Domination of semigroups associated with sectorial forms,
{\it J. Operator Theory \bf 54} (2005), no\.1, 9--25.

\bibitem[Ouh06]{ouh06} {\sc E.\,M.\;Ouhabaz},
Comportement des noyaux de la chaleur des op\'erateurs de Schr\"odinger et
applications \`a certaines \'equations paraboliques semi-lin\'eaires,
{\it J. Funct. Anal. \bf 238} (2006), no\.1, 278--297.

\bibitem[Ou\kern-0.5pt Wa07]{ouwa07} {\sc E.\,M.\;Ouhabaz and F.-Y.\;Wang},
Sharp estimates for intrinsic ultracontractivity on $C^{1,\alpha}$-domains.
{\it Manuscripta Math. \bf 122} (2007), no\.2, 229--244.

\end{thebibliography}
\end{document}